\numberwithin{equation}{section}
\DeclareRobustCommand*{\bfseries}{%
  \not@math@alphabet\bfseries\mathbf
  \fontseries\bfdefault\selectfont
  \boldmath
}
\newcommand{\overseq}[1]{\overline{#1}^{\rm {\it s}} } 
\newcommand{\overseqw}[1]{\overline{#1}^{\rm {\it sw}} } 
\def\wto{\rightharpoonup}
\def\RR{\mathbb{R}}
\def\rk{{\rm rk}}
\def\tr{{\rm tr}}
\def\MM{\mathbb{M}}
\def\NN{\mathbb{N}}
\def\V{\mathcal V}
\DeclareMathOperator*{\ssetminus}{\!\setminus\!}
\def\ssup{\!>\!}
\def\sinf{\!<\!}
\def\L{J}
\def\dom{{\rm dom}}
\def\V{\mathcal V}
\def\inte{{\rm int}}
\def\eps{\varepsilon}
\def\D{D}
\DeclareMathAlphabet{\mathpzc}{OT1}{pzc}{m}{it}
\renewcommand{\limsup}{\varlimsup}
\renewcommand{\liminf}{\varliminf}
\newtheorem{theorem}{Theorem}[section]
\newtheorem{lemma}{Lemma}[section]
\newtheorem{proposition}{Proposition}[section]
\newtheorem{corollary}{Corollary}[section]
\newtheorem{definition}{Definition}[section]
\theoremstyle{example}
\newtheorem{example}{Example}[section]
\theoremstyle{remark}
\newtheorem{remark}{Remark}[section]
 \numberwithin{equation}{section}
 \newlist{hypA}{enumerate}{1}
\setlist[hypA,1]{label={\rm (${\rm A}_{\arabic*}$)}}
\newlist{hypB}{enumerate}{1}
\setlist[hypB,1]{label={\rm (${\rm B}_{\arabic*}$)}}
\newlist{hypC}{enumerate}{1}
\setlist[hypC,1]{label={\rm (${\rm C}_{\arabic*}$)}}
\newlist{hypD}{enumerate}{1}
\setlist[hypD,1]{label={\rm (${\rm D}_{\arabic*}$)}}
\newlist{hypdef}{enumerate}{1}
\setlist[hypdef,1]{label={\rm (${\rm Def}_{\arabic*}$)}}
\newlist{hyp1}{enumerate}{1}
\setlist[hyp1,1]{label={\rm ({\roman*})}}
\newlist{hypH}{enumerate}{1}
\setlist[hypH,1]{label={\rm (${\rm H}_{\arabic*}^\prime$)}}
\newlist{hypHH}{enumerate}{1}
\setlist[hypHH,1]{label={\rm (${\rm H}_{\arabic*}$)}}
\title[Radial representation of lower semicontinuous envelope]{Radial representation of lower semicontinuous envelope}
\author[Omar Anza Hafsa]{Omar Anza Hafsa}
\author[Jean-Philippe Mandallena]{Jean-Philippe Mandallena}
\address{Laboratoire LMGC, UMR-CNRS 5508, Place Eug\`ene Bataillon, 34095 Montpellier, France.}
\address{UNIVERSITE de NIMES, Laboratoire MIPA, Site des Carmes, Place Gabriel P\'eri, 30021 N\^\i mes, France}
\email{Omar Anza Hafsa <omar.anza-hafsa@unimes.fr>}
\email{Jean-Philippe Mandallena <jean-philippe.mandallena@unimes.fr>}
\keywords{Ru-usc functions, relaxation with constraints, radial representation, star-shaped set, extension on the boundary}
\begin{document}
\begin{abstract} We give an extension to a nonconvex setting of the classical radial representation result for lower semicontinuous envelope of a convex function on the boundary of its effective domain. We introduce the concept of radial uniform upper semicontinuity which plays the role of convexity, and allows to prove a radial representation result for nonconvex functions. An application to the relaxation of multiple integrals with constraints on the gradient  is given.
\end{abstract}
\maketitle
\section{Introduction} In convex analysis, a convex and lower semicontinuous function is represented on the boundary of its effective domain by a radial limit along segment from an interior point. Our goal is to give an extension to a nonconvex setting of this result. More precisely, if $X$ is a topological vector space, the problem consists to find general conditions on $f: X\to ]-\infty,\infty]$ and $D\subset\dom f$ such that for some $u_0\in D$ the following implication holds 
\begin{align*}
\forall u\!\in \!{D}\;\;\;\;\; \liminf_{D\ni v\to u}\!\!f(v)=f(u)\implies \forall u\!\in\! \partial{D}\;\;\;\;\; \liminf_{D\ni v\to u}\!\!f(v)=\lim_{t\uparrow 1}f(tu+(1-t)u_0).
\end{align*}

Our motivation comes from the theory of relaxation in the calculus of variations with constraints which consists to the study of the integral representation of the lower semicontinuous envelope of integral functionals subjected to constraints on the gradient. For convex constraints and when the lower semicontinuous envelope is convex, the radial representation on the boundary holds and allows, under some additional requirements, to extend the integral representation to the whole effective domain of the functional (see for instance~\cite{carbone-dearcangelis02}). However, even for convex constraints, the lower semicontinuous envelope is not necessarily convex when the gradient is a matrix. Indeed, in this case it is well known that if an integral representation holds then, usually, the ``relaxed" integrand is quasiconvex or rank-one convex (in the sense of Morrey, see for instance~\cite{dacorogna08}). Therefore, the need of a generalization of the radial representation to a nonconvex setting comes naturally. 

The analysis of how the convexity concept plays to obtain the radial limit representation highlights some kind of uniform upper semicontinuity property, more precisely, when $f:X\to]-\infty,\infty]$ is convex we may write 
\begin{align}\label{eqintro}
\sup_{u\in\dom f}\frac{f(tu+(1-t)u_0)-f(u)}{1+\lvert f(u_0)\rvert+\lvert f(u)\rvert}\le 1-t
\end{align}
for any $u_0\in\dom f$. The left hand side of~\eqref{eqintro} is a kind of uniform semicontinuity modulus which is lower than $0$ when $t\uparrow 1$. This is exactly the property we need to overcome the convex case. For a non necessary convex $f$ satisfying~\eqref{eqintro}, we will say that $f$ is radially uniformly upper semicontinuity (see Definition~\ref{ru-usc-Def-functional}). 

The concept of radial uniform upper semicontinuity, to our best of knowledge, finds its origin in~\cite[Condition (10.1.13), p. 213]{carbone-dearcangelis02} in connection with relaxation problems with constraints. Later, this concept was proved very useful for relaxation problems in the vectorial case with bounded and convex constraints see~\cite{oah10}. Then, it was used to study several homogenization and relaxation problems with constraints (see for instance~\cite{oah-jpm11,oah-jpm12, jpm11arxiv, oah-jpm12arxiv}). 
 
 \medskip

The plan of the paper is as follows. In Section~\ref{defprel}, we first give the definitions of radially uniformly upper semicontinuous functions and star-shaped sets (stated here in an infinite dimensional framework) which play the role of convexity concepts in a nonconvex setting. 

In Section~\ref{mresult}, we state and prove the main result Theorem~\ref{GENERAL-FORMULA}, which gives a radial representation of a lower semicontinuous envelope on the boundary of a star-shaped set. A sequential version of Theorem~\ref{GENERAL-FORMULA} is stated and proved also.

In Section~\ref{consgene}, we deduce some general consequences from Theorem~\ref{GENERAL-FORMULA}. In particular, we deal with the case of convex functions and minimization problems with star-shaped constraints. 

In Section~\ref{2}, we study the stability of radially uniformly upper semicontinuity concept with respect to calculus operations. We also give examples of some general class of radially uniformly upper semicontinuous functions.

Section~\ref{4} is devoted to an application of a relaxation problem of the calculus of variations with constraints on the gradient.

\section{Definitions and preliminaries}\label{defprel}
Let $X$ be an Hausdorff topological vector space. For a function $f:X\to ]-\infty,\infty]$ we denote its effective domain by
\begin{align*}
\dom f:=\{u\in X:f(u)\sinf\infty\}.
\end{align*}
For each $a\ssup 0$, $D\subset \dom f$ and $u_0\in D$, we define $\Delta_{f,D,u_0}^a:[0,1]\to ]-\infty,\infty]$ by
\begin{align*}
\Delta_{f,D,u_0}^a(t):=\sup_{u\in D}\frac{f(tu+(1-t)u_0)-f(u)}{a+\vert f(u)\vert}.
\end{align*}
If $D=\dom f$ then we write $\Delta_{f,u_0}^a:=\Delta_{f,D,u_0}^a$.
\begin{remark} By analogy with the case of uniform continuity, the function $\Delta_{f,D,u_0}^a(\cdot)$ can be seen as a {\em (semi) radial uniform modulus of $f$ on $D$ relatively to $u_0$}. 
\end{remark}

\begin{definition}\label{ru-usc-Def-functional}$ $
\begin{enumerate}
\item[{\rm (1)}]\label{def1} Let $D\subset \dom f$ and $u_0\in D$. We say that $f$ is {\em radially uniformly upper semicontinuous in $D$ relative to $u_0$}, if there exists $a\ssup 0$ such that
\begin{align*}
 \limsup_{t\to 1}\Delta^a_{f,D,u_0}(t)\leq 0.
\end{align*}
Radially uniformly upper semicontinuous will be abbreviated to {\em ru-usc} in what follows.

If $D=\dom f$ then we simply say that $f$ is {\em ru-usc} relative to $u_0\in \dom f$.
\item[{\rm (2)}]\label{def2} We say that $D\subset X$ is a {\em strongly star-shaped set relative to $u_0\in D$}, if 
\begin{align*}
t\overline{D}+(1-t)u_0\subset D\;\hbox{ for all }t\in[0,1[,
\end{align*}
where $\overline D$ is the closure of $D$ in $X$.
\end{enumerate}
\end{definition}

When $D\subset \dom f$ is strongly star-shaped  relative to $u_0\in D$ and $f$ is ru-usc in $D$ relative to $u_0\in D$, we say that {\em $f$ is ru-usc in the strongly star-shaped set $D$ relative to $u_0\in D$}.

\begin{remark}\label{convex stars1} $ $
\begin{hyp1}
\item Our definition of strongly star-shaped sets is more restrictive than the usual one (see~\cite{valentine}) which requires 
\begin{align*}
t{D}+(1-t)u_0\subset D\;\hbox{ for all }t\in[0,1].
\end{align*}
\item\label{convex stars} When $X$ is a normed vector space, a convex set $D\subset X$ with nonempty interior is strongly star-shaped relative to any $u_0\in \inte(D)$, this fact is also known as the {\em line segment principle} (see~\cite[Theorem 2.33, p. 58]{rockafellar-wets98}). Moreover if $D\subset X$ is strongly star-shaped relative to all $u_0\in D$ then $D$ is convex. An example of strongly star-shaped set which is not necessarily convex is given by the union of two different convex sets $D_1\cup D_2$ with $\inte(D_1\cap D_2)\not=\emptyset$, indeed if $u_0\in\inte(D_1\cap D_2)$ then for each $u\in \overline{D_1\cup D_2}$ either $u\in \overline{D_1}$ or $u\in \overline{D_2}$, so $tu+(1-t)u_0\in D_1$ or $tu+(1-t)u_0\in D_2$ for all $t\in[0,1[$ since $D_1$ and $D_2$ are convex with nonempty interior and then strongly star-shaped relative to $u_0$.
\end{hyp1}
\end{remark}

\begin{proposition}\label{convexruusc} Let $f:X\to ]-\infty,\infty]$ be a convex function. Let $D\subset \dom f$ and $u_0\in D$. Then $f$ is ru-usc in $D\subset\dom f$ relative to $u_0\in D$.
\end{proposition}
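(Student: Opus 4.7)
The plan is to verify the definition of ru-usc directly from the defining convexity inequality, choosing the constant $a$ so that it absorbs the contribution of $f(u_0)$. Since $u_0\in D\subset\dom f$ and $f$ takes values in $]-\infty,\infty]$, we have $f(u_0)\in\RR$, so I can set $a:=1+\lvert f(u_0)\rvert\ssup 0$.

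First, I would fix $t\in[0,1]$ and $u\in D$, and apply convexity:
\begin{align*}
f(tu+(1-t)u_0)\le tf(u)+(1-t)f(u_0),
\end{align*}
which rearranges to $f(tu+(1-t)u_0)-f(u)\le (1-t)(f(u_0)-f(u))$. If $f(u_0)-f(u)\le 0$ the right-hand side is already nonpositive, otherwise I bound $f(u_0)-f(u)\le \lvert f(u_0)\rvert+\lvert f(u)\rvert\le a+\lvert f(u)\rvert$ by the choice of $a$. In both cases I obtain
\begin{align*}
\frac{f(tu+(1-t)u_0)-f(u)}{a+\lvert f(u)\rvert}\le 1-t.
\end{align*}

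Since this holds for every $u\in D$, taking the supremum yields $\Delta^a_{f,D,u_0}(t)\le 1-t$, so $\limsup_{t\to 1}\Delta^a_{f,D,u_0}(t)\le 0$, which is exactly the ru-usc property of $f$ in $D$ relative to $u_0$.

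There is no real obstacle here; this is essentially the motivating computation \eqref{eqintro} from the introduction. The only point requiring a little care is ensuring the estimate is valid even when $f(u_0)-f(u)>0$ (so that we actually need the denominator to dominate), which is the reason for choosing $a\geq \lvert f(u_0)\rvert$ rather than an arbitrary positive constant.
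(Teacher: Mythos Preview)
Your proof is correct and follows exactly the same approach as the paper: choose $a=1+\lvert f(u_0)\rvert$, use the convexity inequality to bound $\Delta^a_{f,D,u_0}(t)\le 1-t$, and let $t\to 1$. The paper compresses the verification into a single displayed inequality, while you spell out the intermediate step; your case analysis on the sign of $f(u_0)-f(u)$ is harmless but unnecessary, since $f(u_0)-f(u)\le\lvert f(u_0)\rvert+\lvert f(u)\rvert\le a+\lvert f(u)\rvert$ holds unconditionally.
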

\begin{proof} By convexity of $f$ it is easy to see that for all $t\in [0,1[$
\begin{align*}
\Delta^{1+\lvert f(u_0)\rvert}_{f,D,u_0}(t)=\sup_{u\in D}\frac{f(tu+(1-t)u_0)-f(u)}{1+\lvert f(u_0)\rvert+\lvert f(u)\rvert}\le 1-t
\end{align*}
which shows, by letting $t\to 1$, that $f$ is ru-usc in $D$ relative to $u_0$. 
\end{proof}

For each set $D\subset X$ we denote by $\chi_D$ the indicator function of $D$ given by
\begin{align*}\chi_D(u):=
\left\{
\begin{array}{cl}
  0 &\mbox{ if }u\in D      \\
 \infty &  \mbox{ if }u\in X\setminus D.
\end{array}
\right.
\end{align*}

\begin{remark}\label{indicator-ruusc} If a set $D\subset X$ is strongly star-shaped relative to $u_0\in D$ then $\chi_D$ is ru-usc relative to $u_0$. 
\end{remark}

We denote by $\overline{f}$ the lower semicontinuous envelope of $f$ given by
\begin{align*}
\overline{f}(u)=\liminf_{v\to u}f(v):=\sup_{U\in \V(u)}\inf_{v\in U}f(v)
\end{align*}
where $\V(u)$ denotes the set of neighborhoods of $u\in X$.

The convergence of a sequence $\{u_n\}_n\subset X$ to $u$ with respect to the topology of $X$ is denoted by $u_n{\to}u$. We denote by $\overseq{f}$ the {\em sequential relaxation} of $f$ given by
\begin{align*}
\overseq{f}(u)=\inf\left\{\liminf_{n\to \infty}f(u_n):X\ni u_n{\to} u\right\}.
\end{align*}

\begin{remark}\label{envelope}
It is easy to see that $\overline{f}\le f$ and $\overline{\overline{f}}=\overline{f}$. It is worth to note that $\overline{f}$ is sequentially lower semicontinuous, i.e., for every $\{u_n\}_n\subset X$ and $u\in X$ if $u_n\to u$ then $\liminf_{n\to \infty}\overline{f}(u_n)\ge \overline{f}(u)$. We have $\overline{f}\le \overseq{f}\le f$ and in general $\overseq{\overseq{f}}$ is different from $\overseq{f}$.
\end{remark}
For each $u_0\in\dom f$ the {\em radial extension $\widehat{f}_{u_0}:X\to [-\infty,\infty]$ relative to $u_0$} is defined by
\begin{align*}
\widehat{f}_{u_0}(u):=\liminf_{t\uparrow 1}f(tu+(1-t)u_0)
\end{align*}
where 
\begin{align*}
\liminf_{t\uparrow 1}f(tu+(1-t)u_0)=\inf\left\{\liminf_{n\to \infty} f(t_nu+(1-t_n)u_0):[0,1[\ni t_n\to 1\right\}.
\end{align*}
\begin{remark}\label{domfhat} The effective domain of $\widehat{f}_{u_0}$ satisfies
$
\dom \widehat{f}_{u_0}\subset \dom \overseq{f}\subset \dom \overline{f},
$
indeed, if we consider $u\in\dom \widehat{f}_{u_0}$ then for some $\{t_n\}_n\in [0,1[$ such that $\lim_{n\to\infty} t_n=1$, we have by Remark~\ref{envelope}
\begin{align*}
\infty\ssup\widehat{f}_{u_0}(u)=\liminf_{t\uparrow 1}f(tu+(1-t)u_0)=\lim_{n\to \infty}f(t_nu+(1-t_n)u_0)\ge \overseq f(u)\ge \overline f(u).
\end{align*}
since $t_nu+(1-t_n)u_0\to u$ as $n\to \infty$.
\end{remark}
\section{Main results}\label{mresult}
\subsection{Radial representation on the boundary of a strongly star-shaped set}
Here is the main result of the paper which establishes a radial representation of the lower semicontinuous envelope of a ru-usc function penalized by a strongly star-shaped subset.
\begin{theorem}\label{GENERAL-FORMULA} Let $f:X\to ]-\infty,\infty]$ be a ru-usc function in a strongly star-shaped set $D\subset \dom f$ relative to $u_0\in D$. Assume that $\inf_D f\ssup-\infty$, we have 
\begin{hyp1}
\item\label{gene1} if $\displaystyle \overline{f+\chi_D}=f$ on $D$ then $\displaystyle\overline{f+\chi_D}=\widehat{f}_{u_0}+\chi_{\overline{D}}$;\\
\item\label{gene2} $\displaystyle\widehat{f}_{u_0}(u)=\lim_{t\uparrow 1}f(tu+(1-t)u_0)$ for all $u\in \overline{D}$, and $\displaystyle\widehat{f}_{u_0}$ is ru-usc in $\overline{D}\cap\dom \widehat{f}_{u_0}$.
\end{hyp1} 
\end{theorem}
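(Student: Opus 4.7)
The plan is to prove part (ii) first, since the upgrade of $\liminf$ to $\lim$ along radial rays is the key ingredient that will drive (i). I will write $g_u(t) := f(tu + (1-t)u_0)$ for $u \in \overline D$ and $t \in [0,1[$, which is well-defined by strong star-shapedness, and $\Delta := \Delta^a_{f,D,u_0}$, so $\limsup_{t\uparrow 1} \Delta(t) \leq 0$; each $g_u(t)$ lies in $[\inf_D f, +\infty]$, with $\inf_D f > -\infty$ by hypothesis. To show $\widehat{f}_{u_0}(u) = \lim_{t \uparrow 1} g_u(t)$ for every $u \in \overline D$, I will dispose of the case $\widehat{f}_{u_0}(u) = +\infty$ trivially, set $\ell := \widehat{f}_{u_0}(u) \in \mathbb R$, and fix $s_n \uparrow 1$ with $g_u(s_n) \to \ell$. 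For an arbitrary $\sigma_k \uparrow 1$ I will pick $n_k$ with $s_{n_k} \geq \sigma_k$; the reparametrisation $\tau_k := \sigma_k/s_{n_k} \in [\sigma_k, 1]$ tends to $1$ and satisfies $\tau_k (s_{n_k} u + (1-s_{n_k}) u_0) + (1-\tau_k) u_0 = \sigma_k u + (1-\sigma_k) u_0$, so ru-usc applied to $v = s_{n_k} u + (1-s_{n_k}) u_0 \in D$ with parameter $\tau_k$ yields
\[
g_u(\sigma_k) \leq g_u(s_{n_k}) + \Delta(\tau_k) (a + |g_u(s_{n_k})|).
\]
Passing to $\limsup_k$ using $g_u(s_{n_k}) \to \ell$ and $\limsup_k \Delta(\tau_k) \leq 0$ gives $\limsup_k g_u(\sigma_k) \leq \ell$; since $\sigma_k$ was arbitrary, the full limit exists and equals $\ell$.

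For the ru-usc of $\widehat{f}_{u_0}$, I will fix $u \in \overline D \cap \dom \widehat{f}_{u_0}$ and $t \in [0, 1[$. Note $w := tu + (1-t)u_0 \in D$, and taking $\liminf_s$ in the ru-usc inequality at the fixed point $w$ gives $\widehat{f}_{u_0}(w) \leq f(w) < \infty$, so $w \in \dom \widehat{f}_{u_0}$. Then I will apply ru-usc of $f$ to $v = su + (1-s)u_0 \in D$ with parameter $t$ and let $s \uparrow 1$: the previous step allows both sides to pass to the limit, with the left-hand side $g_u(ts) = g_w(s)$ converging to $\widehat{f}_{u_0}(w)$ by the previous step applied to $w$, producing
\[
\widehat{f}_{u_0}(tu + (1-t)u_0) \leq \widehat{f}_{u_0}(u) + \Delta(t)(a + |\widehat{f}_{u_0}(u)|).
\]
Dividing and taking $\sup_u$ then $\limsup_{t \uparrow 1}$ will yield $\Delta^a_{\widehat{f}_{u_0}, \overline D \cap \dom \widehat{f}_{u_0}, u_0}(t) \leq \Delta(t) \to 0$. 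For (i), both sides are $+\infty$ off $\overline D$; on $\overline D$, the approximation $t_n u + (1-t_n)u_0 \in D \to u$ combined with the first step immediately gives $\overline{f + \chi_D}(u) \leq \widehat{f}_{u_0}(u)$. For the reverse inequality I will set $M := -\inf_D f$ (so $|f(v)| \leq f(v) + 2M$ on $D$) and, given $\epsilon > 0$ with $\Delta(t) \leq \epsilon$ for $t \in [t_\epsilon, 1[$, rearrange ru-usc to $f(tv + (1-t) u_0) \leq (1+\epsilon) f(v) + \epsilon(a+2M)$ on $D$. Using $\overline{f+\chi_D} = f$ at $tu + (1-t) u_0$ combined with the continuity of $v \mapsto tv + (1-t) u_0$ to transfer lower-semicontinuity bounds from $tu + (1-t) u_0$ back to a neighborhood of $u$; letting $t \uparrow 1$ (Step 1) then $\epsilon \to 0$ will conclude $\overline{f + \chi_D}(u) \geq \widehat{f}_{u_0}(u)$.

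The main obstacle will be the reparametrisation trick $\tau_k = \sigma_k/s_{n_k}$ in the first step. Ru-usc is a one-parameter modulus: a single application can only compare $f$ at $v$ with $f$ at a point on the segment from $v$ to $u_0$, so it gives no direct bilateral control of $g_u$ near $1$. Threading an arbitrary $\sigma_k$ through a distinguished minimising $s_{n_k}$ is what lets the two parameters be coupled by $\tau_k \in [\sigma_k, 1]$ with $\tau_k \to 1$, and this coupling is precisely what upgrades $\liminf$ to $\lim$. Once this step is secured, the remaining arguments are essentially mechanical rearrangements of the ru-usc inequality, with the elementary bound $|f(v)| \leq f(v) + 2M$ accounting for the fact that $\Delta$ multiplies $|f(v)|$ rather than $f(v)$.
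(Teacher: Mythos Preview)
Your proposal is correct and follows essentially the same route as the paper: the reparametrisation $\tau_k=\sigma_k/s_{n_k}$ is exactly the paper's $t_n/s_n$ trick for part~(ii), and for the hard direction of~(i) both arguments push the ru-usc inequality through the homeomorphism $v\mapsto tv+(1-t)u_0$ and then invoke $\overline{f+\chi_D}=f$ at the point $tu+(1-t)u_0\in D$. The only cosmetic differences are that the paper proves~(i) before~(ii) (its proof of~(i) needs only the $\liminf$, so your appeal to Step~1 there is convenient but not essential) and frames the hard inequality via level sets $\overline{f+\chi_D}(u)<\delta$ rather than your direct linear rearrangement $f(tv+(1-t)u_0)\le(1+\epsilon)f(v)+\epsilon(a+2M)$.
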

\begin{remark} The condition in Theorem~\ref{GENERAL-FORMULA}~\ref{gene1}, i.e. $\displaystyle \overline{f+\chi_D}=f$ on $D$, means that {\em $f$ is lower semicontinuous in $D$} in the sense that $\liminf_{D\ni v\to u}f(v)= f(u)$ for all $u\in D$. Thus Theorem~\ref{GENERAL-FORMULA}~\ref{gene1} can be rewritten as 
\begin{align*}
\forall u\in {D}\; \liminf_{D\ni v\to u}f(v)=f(u)\implies \forall u\in \partial{D}\; \liminf_{D\ni v\to u}f(v)=\liminf_{t\uparrow 1}f(tu+(1-t)u_0).
\end{align*}
\end{remark}
\noindent{\em Proof of Theorem~\ref{GENERAL-FORMULA}}
\noindent{\em Proof of~\ref{gene1}.} Fix $u\in\overline D$. We have $(1-t)u_0+tu\in D$ for all $t\in [0,1[$ since $D$ is strongly star-shaped relative to $u_0$. Hence
\begin{align*}
\widehat{f}_{u_0}(u)&=\liminf_{t\uparrow 1}f(tu+(1-t)u_0)\\
&=\liminf_{t\uparrow 1}f(tu+(1-t)u_0)+\chi_D(tu+(1-t)u_0)\\
&\ge \overline{f+\chi_D}(u).
\end{align*}

It remains to prove that $\widehat{f}_{u_0}+\chi_{\overline D}\le \overline{f+\chi_D}$. It is equivalent to show that for every $\delta\ssup 0$ and every $u\in\overline D$ we have $\widehat{f}_{u_0}(u)\sinf\delta$ whenever $\overline{f+\chi_D}(u)\sinf\delta$.

Fix $\delta\ssup0$ and $u\in \overline D$ such that $\overline{f+\chi_D}(u)\sinf\delta$. Then there exists $\eta\ssup 0$ such that for every $U\in\V(u)$ it holds
\begin{align}\label{gf2}
\inf_{v\in U}\left(f+\chi_D\right)(v)=\inf_{v\in U\cap D} f(v)\le \delta-2\eta.
\end{align}
For each $U\in\V(u)$ there exists $v_U\in U\cap D$ (Note that $U\cap D\not=\emptyset$ for any $U\in\V(u)$ since $u\in\overline D$.) such that
\begin{align}\label{gf3}
f(v_U)\sinf \inf_{v\in U\cap D}f(v)+\eta\le \delta-\eta
\end{align}
since~\eqref{gf2}. 

Let $\{t_n\}_n\in[0,1[$ be such that $\lim_{n\to\infty} t_n=1$, and for every $n\in\NN$
\begin{align}
\Delta_{f,D,u_0}^a(t_{n})\le \frac{1}{n+1}\label{gf4.3}
\end{align}
since $f$ is ru-usc in $D$ relative to $u_0$.

Fix $U\in\V(u)$. By~\eqref{gf3} and~\eqref{gf4.3} we have 
\begin{align}\label{gf5}
f(t_{n} v_U+(1-t_{n})u_0)&\le \Delta_{f,D,u_0}^a(t_{n})\left(a+\lvert f(v_U)\rvert\right)+f(v_U)\\
&\le \frac{1}{n+1}\left(a+\max\left\{-\inf_D f,\delta-\eta\right\}\right)+\delta-\eta.\notag
\end{align}
On the other hand, for every $n\in\NN$ it holds
\begin{align*}
f(t_{n} v_U+(1-t_{n})u_0)\ge \inf_{v\in U\cap D} f(t_{n} v+(1-t_{n})u_0).
\end{align*}
We have $t_n u+(1-t_n)u_0\in D$ for all $n\in\NN$ since $D$ is strongly star-shaped relative to $u_0$. So, by using~\eqref{gf5}, the assumption $\overline{f+\chi_D}=f$ on $D$ we obtain for every $n\in\NN$
\begin{align*}
 \delta-\eta&\ge\sup_{U\in \V(u)}\inf_{v\in U\cap D} f(t_{n} v+(1-t_{n})u_0)\\
&=\overline{f+\chi_D}(t_{n} u+(1-t_{n})u_0)\\
&=f(t_{n} u+(1-t_{n})u_0).
\end{align*}
Letting $n\to\infty$ we find that $\widehat{f}_{u_0}(u)\le\delta-\eta$, which completes the Proof of~\ref{gene1}.

\noindent{\em Proof of~\ref{gene2}.} We first have to prove that for every $u\in \overline{D}$
\begin{align*}
\liminf_{t\to 1^-}f(tu+(1-t)u_0)=\limsup_{t\to 1^-}f(tu+(1-t)u_0).
\end{align*}

Fix $u\in \overline{D}$. It suffices to prove that
\begin{equation}\label{ru-usc-AssUmPtIoN1}
\limsup_{t\to 1}\Psi(t)\leq\liminf_{t\to 1}\Psi(t).
\end{equation}
where $\Psi(t):=f(tu+(1-t)u_0)$ for all $t\in [0,1]$. Without loss of generality we can assume that $\liminf_{t\to 1}\Psi(t)\sinf\infty$. Choose two sequences $\{t_n\}_n, \{s_n\}_n\subset]0,1[$ such that\footnote{Once the sequences $\{t_n\}_n, \{s_n\}_n\subset]0,1[$ satisfying $t_n\to 1$, $s_n\to 1$ choosen, we can extract a subsequence $\{s_{\sigma(n)}\}_n$ such that $\frac{t_n}{s_{\sigma(n)}}< 1$ for all $n\in\NN$. Indeed, it suffices to consider the increasing map $\sigma:\NN\to\NN$ defined by $\sigma(0):=\min\{\nu\in\NN:s_\nu>t_0\}$ and $\sigma(n+1):=\min\{\nu\in\NN:\nu>\sigma(n)\mbox{ and }s_\nu>t_{n+1}\}$.} $t_n\to 1$, $s_n\to 1$, $\frac{t_n}{s_n}< 1$ for all $n\in\NN$, and 
\begin{align*}
\limsup_{t\to 1}\Psi(t)&=\lim_{n\to\infty}\Psi(t_n);\\
\liminf_{t\to 1}\Psi(t)&=\lim_{n\to\infty}\Psi(s_n).
\end{align*}
Since $D$ is strongly star-shaped relative to $u_0$ we have $t_nu+(1-t_n)u_0\in D$ for all $n\in\NN$, so we can assert that for every $n\in\NN$
\begin{align}\label{ru-usc-AssUmPtIoN2}
\Psi(t_n)&=\Psi\left(\frac{t_n}{s_n}s_n\right)\\
&=f\left(\frac{t_n}{s_n} (s_nu+(1-s_n)u_0)+\left(1-\frac{t_n}{s_n}\right)u_0\right)\notag\\
&\leq \Delta^a_{f,D,u_0}\left(\frac{t_n}{s_n}\right)(a+\lvert \Psi(s_n)\rvert)+\Psi(s_n).\notag
\end{align}
We have that~\eqref{ru-usc-AssUmPtIoN1} follows from~\eqref{ru-usc-AssUmPtIoN2} by letting $n\to\infty$ since $f$ is ru-usc in $D$ relative to $u_0$.

It remains to prove that $\displaystyle\widehat{f}_{u_0}$ is ru-usc in $\overline{D}\cap\dom \widehat{f}_{u_0}$. Fix $t\in[0,1[$ and $u\in\overline{D}\cap\dom \widehat{f}_{u_0}$. By the first part of the Proof of~\ref{gene2} we can assert that
\begin{align*}
\widehat{f}_{u_0}(u)&=\lim_{s\to 1}f(su+(1-s)u_0)\\
\widehat{f}_{u_0}(tu+(1-t)u_0)&=\lim_{s\to 1}f(s(tu+(1-t)u_0)+(1-s)u_0).
\end{align*}
So, we have
\begin{align*}
\begin{split}
&\frac{\widehat{f}_{u_0}(tu+(1-t)u_0)-\widehat{f}_{u_0}(u)}{a+\vert\widehat{f}_{u_0}(u)\vert}\\
&\phantom{=}=\lim_{s\to 1}\frac{f(t(su+(1-s)u_0)+(1-t)u_0)-f(su+(1-s)u_0)}{a+\vert f(su+(1-s)u_0)\vert}\\
&\phantom{=}\le \Delta_{f,D,u_0}^a(t).
\end{split}
\end{align*}
It follows that $\Delta_{f,\overline{D}\cap\dom \widehat{f}_{u_0},u_0}^a(t)\le  \Delta_{f,D,u_0}^a(t)$ and the proof is complete by letting $t\to 1$. 
\hfill$\blacksquare$
\subsection{Sequential version of Theorem~\ref{GENERAL-FORMULA}} It is well known that if $X$ has a countable base of neighborhoods of $0$ then $\overseq{f}=\overline{f}$. For a subset $D\subset X$, we denote by $\overseq{D}$ the sequential closure of $D$, i.e., $u\in \overseq{D}$ if and only if there exists a sequence $\{v_n\}_n\subset D$ such that $v_n\to u$ as $n\to\infty$. We say that $D\subset X$ is a {\em sequentially strongly star-shaped set relative to $u_0\in D$}, if 
\begin{align*}
t\overseq{D}+(1-t)u_0\subset D\;\hbox{ for all }t\in[0,1[.
\end{align*}

Here is a sequential version of Theorem~\ref{GENERAL-FORMULA}. 
\begin{theorem}\label{GENERAL-FORMULA-SEQ} Let $f:X\to ]-\infty,\infty]$ be a ru-usc function in a strongly star-shaped set $D\subset \dom f$ relative to $u_0\in D$. Assume that $\inf_D f\ssup -\infty$, we have 
\begin{hyp1}
\item\label{gene1seq} if $\displaystyle \overseq{f+\chi_D}=f$ on $D$ then $\displaystyle\overseq{f+\chi_D}=\widehat{f}_{u_0}+\chi_{\overseq{D}}$;\\
\item\label{gene2seq} $\displaystyle\widehat{f}_{u_0}(u)=\lim_{t\uparrow 1}f(tu+(1-t)u_0)$ for all $u\in \overseq{D}$, and $\displaystyle\widehat{f}_{u_0}$ is ru-usc in $\overseq{D}\cap\dom \widehat{f}_{u_0}$.
\end{hyp1} 
\end{theorem}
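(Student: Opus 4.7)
The plan is to mimic the proof of Theorem~\ref{GENERAL-FORMULA}, with sequential arguments replacing the neighbourhood chasing ones. The key geometric observation is that, although the hypothesis is only that $D$ is strongly star-shaped, the inclusion $\overseq{D}\subset\overline{D}$ gives $tu+(1-t)u_0\in D$ for all $u\in\overseq{D}$ and $t\in[0,1[$, which is all the argument actually uses.

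For the inequality $\overseq{f+\chi_D}\le\widehat{f}_{u_0}+\chi_{\overseq{D}}$ in~\ref{gene1seq}, given $u\in\overseq{D}$ and any $\{t_n\}\subset[0,1[$ with $t_n\to 1$, the points $v_n:=t_nu+(1-t_n)u_0$ lie in $D$ and converge to $u$, so by definition of the sequential relaxation $\overseq{f+\chi_D}(u)\le\liminf_n f(v_n)$; minimising over $\{t_n\}$ produces the bound. For the reverse inequality I would fix $u\in\overseq{D}$ with $\overseq{f+\chi_D}(u)<\delta$, pick $\eta>0$, and extract (after a subsequence) $\{v_m\}\subset D$ with $v_m\to u$ and $f(v_m)\le\delta-\eta$ for all $m$. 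Choosing $t_k\uparrow 1$ with $\Delta^a_{f,D,u_0}(t_k)\le 1/(k+1)$, the ru-usc inequality yields, uniformly in $m$,
\begin{align*}
f(t_kv_m+(1-t_k)u_0)\le\tfrac{1}{k+1}\bigl(a+\max\{-\inf_D f,\delta-\eta\}\bigr)+\delta-\eta.
\end{align*}
For each fixed $k$ one has $t_kv_m+(1-t_k)u_0\in D$ and $t_kv_m+(1-t_k)u_0\to t_ku+(1-t_k)u_0\in D$ as $m\to\infty$, so the hypothesis $\overseq{f+\chi_D}=f$ on $D$ together with the very definition of $\overseq{\cdot}$ gives $f(t_ku+(1-t_k)u_0)\le\liminf_m f(t_kv_m+(1-t_k)u_0)$. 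Combining with the preceding estimate and letting $k\to\infty$ produces $\widehat{f}_{u_0}(u)\le\delta-\eta<\delta$, as required.

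Part~\ref{gene2seq} is obtained by replaying the proof of Theorem~\ref{GENERAL-FORMULA}~\ref{gene2} verbatim: that argument never uses the topological closure beyond the inclusion $t\overline{D}+(1-t)u_0\subset D$, which remains true with $\overseq{D}$ in place of $\overline{D}$. I do not expect any serious obstacle; the only technical point is to bookkeep the two successive passages to the limit (first $m\to\infty$, then $k\to\infty$) and to recognise that $\overseq{f+\chi_D}(w)\le\liminf_m f(t_kv_m+(1-t_k)u_0)$ is nothing but the definition of $\overseq{\cdot}$ applied to the convergent sequence $t_kv_m+(1-t_k)u_0\to w=t_ku+(1-t_k)u_0$.
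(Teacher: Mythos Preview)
Your proposal is correct and follows essentially the same approach as the paper's own proof: the same upper bound via the sequence $t_nu+(1-t_n)u_0$, the same $\delta$--$\eta$ level-set argument for the lower bound with the double limit (first $m\to\infty$ using $\overseq{f+\chi_D}=f$ on $D$, then $k\to\infty$), and the verbatim reuse of Theorem~\ref{GENERAL-FORMULA}~\ref{gene2} for part~\ref{gene2seq}. The only cosmetic difference is that the paper explicitly records the bound $|f(v_n)|\le\max\{-\inf_D f,\delta-\eta\}$ as a separate step before invoking the ru-usc inequality, whereas you absorb it directly into the displayed estimate.
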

\begin{proof}For the sake of completeness we give the proof of~\ref{gene1seq} although very similar to the one of Theorem~\ref{GENERAL-FORMULA}~\ref{gene1}.

Fix $u\in\overseq D$. Let $\{t_n\}_n\subset [0,1[$ such that $\lim_{n\to\infty}t_n=1$ and \[\widehat{f}_{u_0}(u)=\lim_{n\to\infty} f(t_nu+(1-t_n)u_0).\] We have $(1-t_n)u_0+t_n u\in D$ for all $n\in\NN$ since $D$ is strongly star-shaped relative to $u_0$. Hence
\begin{align*}
\widehat{f}_{u_0}(u)&=\lim_{n\to\infty} f(t_nu+(1-t_n)u_0)\\
&=\lim_{n\to\infty} f(t_nu+(1-t_n)u_0)+\chi_D(t_nu+(1-t_n)u_0)\\
&\ge \overseq{f+\chi_D}(u)
\end{align*}
since $t_nu+(1-t_n)u_0\to u$ as $n\to \infty$.

It remains to prove that $\widehat{f}_{u_0}+\chi_{\overseq{D}}\le \overseq{f+\chi_D}$. Fix $\delta\ssup0$ and $u\in \overseq D$ such that $\overseq{f+\chi_D}(u)\sinf\delta$. So we can find $\eta\in ]0,\delta[$ and a sequence $\{v_n\}_n\subset D$ such that 
\begin{align*}
v_n\to u\quad\mbox{ and }\quad\lim_{n\to\infty}f(v_n)\le\delta-\eta.
\end{align*}
Choose a subsequence $\{v_n\}_n\subset D$ (not relabelled) such that 
 \begin{align}\label{gf3seq}
v_n\to u\quad\mbox{ and }\quad \lvert f(v_n)\rvert \le\max\left\{-\inf_D f,\delta-\eta\right\} \mbox{ for all }n\in\NN.
\end{align}
Let $\{t_k\}_k\subset [0,1[$ such that $\lim_{k\to\infty}t_k=1$, and for every $k\in\NN$
\begin{align}
&\Delta_{f,D,u_0}^a(t_{k})\le \frac{1}{k+1}\label{gf4.3seq}
\end{align}
since $f$ is ru-usc in $D$ relative to $u_0$.
So, by~\eqref{gf3seq} and~\eqref{gf4.3seq}, for every $k,n\in\NN$ we have 
\begin{align}\label{gf5seq}
f(t_{k} v_n+(1-t_{k})u_0)&\le \Delta_{f,D,u_0}^a(t_{k})\left(a+\lvert f(v_n)\rvert\right)+f(v_n)\\
&\le \frac{1}{k+1}\left(a+\max\left\{-\inf_D f,\delta-\eta\right\}\right)+\delta-\eta.\notag
\end{align}
We have $t_k u+(1-t_k)u_0\in D$ for all $k\in\NN$ since $D$ is strongly star-shaped relative to $u_0$. Letting $n\to\infty$ in~\eqref{gf5seq} and using the assumption $\overline{f+\chi_D}=f$ on $D$ we obtain for every $k\in\NN$
\begin{align*}
f(t_{k} u+(1-t_{k})u_0)&=\overline{f+\chi_D}(t_{k} u+(1-t_{k})u_0)\\
&\le\frac{1}{k+1}\left(a+\max\left\{-\inf_D f,\delta-\eta\right\}\right)+\delta-\eta
\end{align*} 
Letting $k\to\infty$ we find that $\widehat{f}_{u_0}(u)\le\delta-\eta$, which completes the Proof of~\ref{gene1seq}.

The proof of~\ref{gene2seq} is the same as the proof of Theorem~\ref{GENERAL-FORMULA}~\ref{gene2}. 
\end{proof}
\section{General consequences}\label{consgene}
\subsection{Radial representation with the effective domain as constraint}
When $D=\dom f$ we have the following result.

\begin{corollary}\label{GENERAL-FORMULA2} Let $f:X\to ]-\infty,\infty]$ be a function such that $\overline f$ is ru-usc in the strongly star-shaped set $\dom f$ relative to $u_0\in \dom f$. If $g: X\to ]-\infty,\infty]$ is such that $\overline{f}=g$ on $\dom f$ and $\inf_{\dom f}g\ssup-\infty$ then
\begin{align*}
\overline{f}=\widehat{g}_{u_0}+\chi_{\dom\overline{f}}
\end{align*}
where $\widehat{g}_{u_0}(u)=\lim_{t\uparrow 1}g(tu+(1-t)u_0)$ for all $u\in \overline{\dom f}$.
\end{corollary}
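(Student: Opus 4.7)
The plan is to reduce the corollary to Theorem~\ref{GENERAL-FORMULA}(i) applied to $g$ with constraint set $D = \dom f$. Three ingredients must be verified: that $g$ is ru-usc in $\dom f$ relative to $u_0$, that $\inf_{\dom f} g \ssup -\infty$ (given), and that $\overline{g + \chi_{\dom f}} = g$ on $\dom f$.

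For the ru-usc transfer, I would exploit strong star-shapedness: for every $u \in \dom f$ and $t \in [0,1[$, the point $tu + (1-t)u_0$ again lies in $\dom f$, so both numerator and denominator of $\Delta^a_{\,\cdot\,, \dom f, u_0}(t)$ only see values on $\dom f$, where $g$ coincides with $\overline{f}$. Hence $\Delta^a_{g, \dom f, u_0} = \Delta^a_{\overline{f}, \dom f, u_0}$ on $[0,1[$, and the assumed ru-usc of $\overline{f}$ transfers verbatim to $g$.

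The identification of $\overline{g + \chi_{\dom f}}$ with $\overline{f}$ on all of $X$ is the heart of the argument. Since $g + \chi_{\dom f} \ge \overline{f}$ pointwise, and $\overline{f}$ is already lower semicontinuous, the lsc envelope satisfies $\overline{g + \chi_{\dom f}} \ge \overline{f}$. For the reverse inequality, for any $u$ with $\overline{f}(u) \sinf \infty$ and any $\eps \ssup 0$, the definition of $\overline{f}$ produces, in every neighborhood $U$ of $u$, a point $v \in U \cap \dom f$ with $f(v) \le \overline{f}(u) + \eps$; this gives $\inf_{v \in U \cap \dom f} g(v) \le \overline{f}(u) + \eps$, and taking the supremum over $U$ then letting $\eps \to 0$ yields the reverse bound. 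In particular $\overline{g + \chi_{\dom f}} = g$ on $\dom f$, so Theorem~\ref{GENERAL-FORMULA}(i) delivers $\overline{f} = \widehat{g}_{u_0} + \chi_{\overline{\dom f}}$, while part~(ii) upgrades the liminf defining $\widehat{g}_{u_0}(u)$ to a genuine limit for every $u \in \overline{\dom f}$.

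Finally, I would replace $\chi_{\overline{\dom f}}$ by $\chi_{\dom \overline{f}}$. The two indicators agree on $\dom \overline{f}$ and on $X \setminus \overline{\dom f}$, and may differ only on $\overline{\dom f} \setminus \dom \overline{f}$, where $\overline{f} = \infty$; the formula just established then forces $\widehat{g}_{u_0} = \infty$ there as well (consistently with Remark~\ref{domfhat} applied to $\overline{f}$). Hence $\widehat{g}_{u_0} + \chi_{\overline{\dom f}} = \widehat{g}_{u_0} + \chi_{\dom \overline{f}}$, giving the stated formula. The main technical point I expect is the sharp identification $\overline{g + \chi_{\dom f}} = \overline{f}$ away from $\dom f$; everything else is either automatic from strong star-shapedness or indicator-bookkeeping.
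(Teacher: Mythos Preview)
Your argument is correct and follows essentially the same route as the paper: reduce to Theorem~\ref{GENERAL-FORMULA} with $D=\dom f$, verify the ru-usc hypothesis via the identity $g=\overline f$ on $\dom f$ (which, by strong star-shapedness, is all that the modulus $\Delta^a$ sees), and then pass from $\chi_{\overline{\dom f}}$ to $\chi_{\dom\overline f}$ using $\dom\overline f\subset\overline{\dom f}$ together with Remark~\ref{domfhat}. The only presentational difference is that the paper applies Theorem~\ref{GENERAL-FORMULA} to $\overline f$ rather than to $g$, obtaining $\overline f=\widehat{\overline f}_{u_0}+\chi_{\overline{\dom f}}$ and then identifying $\widehat{\overline f}_{u_0}=\widehat g_{u_0}$ on $\dom\overline f$; since $g+\chi_{\dom f}=\overline f+\chi_{\dom f}$ pointwise, the two applications of the theorem are literally the same computation, and your version simply skips that final identification step. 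Your envelope identity $\overline{g+\chi_{\dom f}}=\overline f$ could in fact be obtained in one line from this observation, via the sandwich $\overline f\le \overline f+\chi_{\dom f}\le f$, exactly as the paper does.
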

\begin{proof} Since $\overline{f}=g$ on $\dom f$ and $\dom f$ is strongly star-shaped relative to $u_0\in \dom f$ the function $g$ is ru-usc in $\dom f$ relative to $u_0$. Thus applying Theorem~\ref{GENERAL-FORMULA}~\ref{gene2} we obtain for every $u\in\overline{\dom f}$
\begin{align*}
\widehat{g}_{u_0}(u)=\lim_{t\to1^-}g(tu+(1-t)u_0).
\end{align*}
Now, we have to prove that $\overline{f}=\widehat{g}_{u_0}+\chi_{{\dom \overline{f}}}$. It is easy to see that
\[
\overline{f}\le \overline{f}+\chi_{\dom f}\le f+\chi_{\dom f} 
\]
hence $\overline{\overline{f}+\chi_{\dom f}}=\overline{f}$ since $ \overline{f+\chi_{\dom f}}=\overline{f}$ and $\overline{\overline{f}}=\overline{f}$. Apply Theorem~\ref{GENERAL-FORMULA}~\ref{gene1} with $\overline{f}$ in place of $f$ and $D=\dom f$, we obtain 
$
\overline{\overline{f}+\chi_{\dom f}}=\overline{f}=\widehat{\overline{f}}_{u_0}+\chi_{\overline{\dom f}}.
$
Taking account of Remark~\ref{domfhat} we deduce that 
\[
\overline{f}=\widehat{\overline{f}}_{u_0}+\chi_{\dom \overline{f}}.
\]
It remains to prove that $\widehat{\overline{f}}_{u_0}=\widehat{g}_{u_0}$ on ${\dom \overline{f}}$. Fix $u\in {\dom \overline{f}}$, then $tu+(1-t)u_0\in\dom f$ for all $t\in [0,1[$ since $\dom f$ is strongly star-shaped relative to $u_0$. It follows that
\begin{align*}
\widehat{\overline{f}}_{u_0}(u)=\liminf_{t\uparrow 1}\overline{f}(tu+(1-t)u_0)=\liminf_{t\uparrow 1}g(tu+(1-t)u_0)=\widehat{g}_{u_0}(u)
\end{align*}
since $\overline{f}=g$ on $\dom f$, which completes the proof.
\end{proof}
\begin{remark} The previous result can be useful in relaxation problems, indeed, in practice we are able to prove an integral representation of $\overline f$, say $g$, on $\dom f$ only. Then we can use Corollary~\ref{GENERAL-FORMULA2} to have a representation on $\dom \overline{f}$. To obtain a full integral representation on $\dom \overline{f}$, we have then to commute ``$\lim_{t\uparrow 1}$" with the integration in the radial limit $\widehat{g}_{u_0}$ (see for instance Theorem\ref{quasiconstraints}).
\end{remark}
Analysis similar to that in the proof of Corollary~\ref{GENERAL-FORMULA2} gives the following sequential version.
\begin{corollary}\label{GENERAL-FORMULA2seq} Let $f:X\to ]-\infty,\infty]$ be a function such that $\overseq f$ is ru-usc in the strongly star-shaped set $\dom f$ relative to $u_0\in \dom f$. If $g: X\to ]-\infty,\infty]$ is such that $\overseq{f}=\overseq{g}=g$ on $\dom f$ and $\inf_{\dom f}g\ssup-\infty$ then
\begin{align*}
\overseq{f}=\widehat{g}_{u_0}+\chi_{\dom\overseq{f}}
\end{align*}
where $\widehat{g}_{u_0}(u)=\lim_{t\uparrow 1}g(tu+(1-t)u_0)$ for all $u\in \overseq{\dom f}$.
\end{corollary}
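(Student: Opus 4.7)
The plan is to adapt the proof of Corollary \ref{GENERAL-FORMULA2} to the sequential setting by systematically replacing $\overline{\cdot}$ with $\overseq{\cdot}$, the closure $\overline{\dom f}$ with the sequential closure $\overseq{\dom f}$, and Theorem \ref{GENERAL-FORMULA} with Theorem \ref{GENERAL-FORMULA-SEQ}. The non-sequential proof crucially used the idempotence $\overline{\overline{h}}=\overline{h}$; since this identity can fail for $\overseq{\cdot}$ in a general Hausdorff topological vector space, its role here is played by the extra hypothesis $\overseq{g}=g$ on $\dom f$.

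First, I would transfer the ru-usc property from $\overseq{f}$ to $g$: since $\overseq{f}=g$ on $\dom f$ and $\overseq{f}$ is ru-usc in the strongly star-shaped set $\dom f$ relative to $u_0$, the function $g$ itself is ru-usc in $\dom f$ relative to $u_0$. Theorem \ref{GENERAL-FORMULA-SEQ}\ref{gene2seq} applied to $g$ with $D=\dom f$ then provides
\[
\widehat{g}_{u_0}(u)=\lim_{t\uparrow1}g(tu+(1-t)u_0)\qquad\text{for all }u\in\overseq{\dom f}.
\]

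Second, I would establish the sequential surrogate of the non-sequential identity $\overline{\overline{f}+\chi_{\dom f}}=\overline{f}$. The equality $\overseq{f}=g$ on $\dom f$ gives $\overseq{f}+\chi_{\dom f}=g+\chi_{\dom f}$ on $X$, hence $\overseq{\overseq{f}+\chi_{\dom f}}=\overseq{g+\chi_{\dom f}}$. On $\dom f$ the chain $\overseq{g+\chi_{\dom f}}\le g+\chi_{\dom f}=g=\overseq{g}\le\overseq{g+\chi_{\dom f}}$, where the middle equality is precisely the hypothesis $\overseq{g}=g$ on $\dom f$, yields $\overseq{\overseq{f}+\chi_{\dom f}}=\overseq{f}$ on $\dom f$. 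Theorem \ref{GENERAL-FORMULA-SEQ}\ref{gene1seq} applied to $\overseq{f}$ in place of $f$ then produces $\overseq{\overseq{f}+\chi_{\dom f}}=\widehat{\overseq{f}}_{u_0}+\chi_{\overseq{\dom f}}$ on $X$; combined with the always-valid inequality $\overseq{\overseq{f}+\chi_{\dom f}}\le\overseq{f+\chi_{\dom f}}=\overseq{f}$ and the equality just obtained on $\dom f$, this gives $\overseq{f}=\widehat{\overseq{f}}_{u_0}+\chi_{\overseq{\dom f}}$ throughout $X$.

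Third, I would conclude by identifying $\widehat{\overseq{f}}_{u_0}$ with $\widehat{g}_{u_0}$ and trimming the indicator. The star-shaped hypothesis forces $tu+(1-t)u_0\in\dom f$ for every $u\in\overseq{\dom f}$ and $t\in[0,1[$, so $\overseq{f}(tu+(1-t)u_0)=g(tu+(1-t)u_0)$ and therefore $\widehat{\overseq{f}}_{u_0}=\widehat{g}_{u_0}$ on $\overseq{\dom f}$. Finally, the sequential analogue of Remark \ref{domfhat}, namely $\dom\widehat{g}_{u_0}\subset\dom\overseq{g}$, allows one to replace $\chi_{\overseq{\dom f}}$ by $\chi_{\dom\overseq{f}}$, producing the announced equality $\overseq{f}=\widehat{g}_{u_0}+\chi_{\dom\overseq{f}}$.

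The main obstacle is precisely the failure of $\overseq{\overseq{h}}=\overseq{h}$: without it, one cannot simply transcribe the non-sequential proof, and the extra hypothesis $\overseq{g}=g$ on $\dom f$ is indispensable to recover the ``double-envelope'' identity on the effective domain. A related subtlety is extending the equality $\overseq{\overseq{f}+\chi_{\dom f}}=\overseq{f}$ from $\dom f$ to all of $X$, which must be threaded carefully through the two applications of Theorem \ref{GENERAL-FORMULA-SEQ} together with the trivial monotonicity bounds on sequential envelopes.
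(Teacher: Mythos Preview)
Your plan mirrors exactly what the paper intends (its own proof is literally ``Analysis similar to that in the proof of Corollary~\ref{GENERAL-FORMULA2}''), and you correctly isolate both the obstacle (the possible failure of $\overseq{\overseq{f}}=\overseq{f}$) and the role of the added hypothesis $\overseq{g}=g$ on $\dom f$. Your first and third steps are fine.

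The gap is in your second paragraph. From (a) $\overseq{\overseq{f}+\chi_{\dom f}}=\widehat{\overseq{f}}_{u_0}+\chi_{\overseq{\dom f}}$ on $X$, (b) $\overseq{\overseq{f}+\chi_{\dom f}}\le\overseq{f}$ on $X$, and (c) equality on $\dom f$, you assert $\overseq{f}=\widehat{\overseq{f}}_{u_0}+\chi_{\overseq{\dom f}}$ on all of $X$. But (a)--(c) give only $\widehat{\overseq{f}}_{u_0}\le\overseq{f}$ on $\overseq{\dom f}$, equality on $\dom f$, and (trivially) equality outside $\overseq{\dom f}$; the reverse inequality $\overseq{f}(u)\le\widehat{\overseq{f}}_{u_0}(u)$ for $u\in\overseq{\dom f}\setminus\dom f$ is missing. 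In the non-sequential proof this step comes for free from the \emph{global} identity $\overline{\overline{f}}=\overline{f}$, which forces $\overline{\overline{f}+\chi_{\dom f}}=\overline{f}$ everywhere; your replacement hypothesis $\overseq{g}=g$ lives only on $\dom f$ and does not produce a global statement. Concretely, for $u\in\overseq{\dom f}\setminus\dom f$ you would need $\overseq{f}(u)\le\lim_{t\uparrow1}\overseq{f}(tu+(1-t)u_0)$, i.e.\ sequential lower semicontinuity of $\overseq{f}$ along the radial segment, and this is precisely what can fail when $X$ is not first countable (no diagonal argument is available to pass from approximating sequences at each $tu+(1-t)u_0$ to a single sequence at $u$). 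Your closing remark acknowledges the difficulty without resolving it.
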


In the following, we state a consequence of Corollary~\ref{GENERAL-FORMULA2} and Corollary~\ref{GENERAL-FORMULA2seq} in the particular case where $g$ is replaced by $f$.

\begin{corollary}\label{ruusc-representation} Let $f:X\to ]-\infty,\infty]$ be a function such that $\overline f$ (resp. $\overseq f$) is ru-usc in the strongly star-shaped set $\dom f$ relative to $u_0\in\dom f$. If $\overline f=f$ (resp. $\overseq f=f$) on $\dom f$ and $\inf_{\dom f}f\ssup-\infty$ then 
\begin{align*}
\overline f=\widehat{f}_{u_0}\;{\rm (\mbox{resp. }}\overseq f=\widehat{f}_{u_0}{\rm )}.
\end{align*}
\end{corollary}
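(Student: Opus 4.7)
The plan is to deduce this corollary directly from Corollary~\ref{GENERAL-FORMULA2} (resp.\ Corollary~\ref{GENERAL-FORMULA2seq}) by specialising to the choice $g:=f$. First I would check that the hypotheses match up. The assumption ``$\overline f$ ru-usc in the strongly star-shaped set $\dom f$ relative to $u_0$'' is precisely the first hypothesis of Corollary~\ref{GENERAL-FORMULA2}; and the conditions $\overline f=f$ on $\dom f$ together with $\inf_{\dom f} f\ssup-\infty$ are exactly what is required of $g$ once we set $g:=f$. Applying Corollary~\ref{GENERAL-FORMULA2} then yields
\begin{align*}
\overline f \;=\; \widehat{f}_{u_0}+\chi_{\dom \overline f}.
\end{align*}

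Second, I would argue that the indicator term on the right can be discarded, so that $\overline f=\widehat{f}_{u_0}$ on all of $X$. This is the only mildly nontrivial step. On $\dom\overline f$ the identity is already the content of the displayed formula. Off $\dom\overline f$ we have $\overline f(u)=\infty$, and Remark~\ref{domfhat} (which asserts $\dom \widehat{f}_{u_0}\subset\dom\overline f$) forces $\widehat{f}_{u_0}(u)=\infty$ as well, so both sides equal $\infty$ there. Hence the two functions agree pointwise on $X$.

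For the sequential statement the argument is the same, with Corollary~\ref{GENERAL-FORMULA2seq} replacing Corollary~\ref{GENERAL-FORMULA2}. The hypothesis ``$\overseq f=\overseq g=g$ on $\dom f$'' required there becomes, after setting $g:=f$, the single condition $\overseq f=f$ on $\dom f$ (the other equality being tautological), which is assumed; the indicator $\chi_{\dom\overseq f}$ is removed in the same way via the inclusion $\dom\widehat{f}_{u_0}\subset\dom\overseq f$ from Remark~\ref{domfhat}.

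Since the proof reduces entirely to invoking an earlier corollary with $g=f$ and then a one-line domain chase, there is no real obstacle; the only point that warrants care is the removal of the indicator, which is handled by the inclusion in Remark~\ref{domfhat}.
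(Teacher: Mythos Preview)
Your proposal is correct and follows exactly the paper's own proof: apply Corollary~\ref{GENERAL-FORMULA2} (resp.\ Corollary~\ref{GENERAL-FORMULA2seq}) with $g=f$, then drop the indicator $\chi_{\dom\overline f}$ (resp.\ $\chi_{\dom\overseq f}$) using the inclusion $\dom\widehat{f}_{u_0}\subset\dom\overseq f\subset\dom\overline f$ from Remark~\ref{domfhat}.
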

\begin{proof} Apply Corollary~\ref{GENERAL-FORMULA2} (resp. Corollary~\ref{GENERAL-FORMULA2seq}) with $f$ in place of $g$, we obtain $\overline f=\widehat{f}_{u_0}+\chi_{\dom \overline f}$ (resp. $\overseq f=\widehat{f}_{u_0}+\chi_{\dom \overseq f}$). To finish the proof it suffices to see that $\dom \widehat{f}_{u_0}\subset \dom \overseq f\subset\dom \overline f$ since Remark~\ref{domfhat}. 
\end{proof}

We examine the case $f$ convex and bounded below.
\begin{corollary} Let $f:X\to ]-\infty,\infty]$ be a convex function. If there exists $u_0\in X$ such that $f$ is bounded above in a neighborhood of $u_0$ and $\inf_{\inte(\dom f)}  f\ssup-\infty$ then

\begin{align}\label{convex extension 2}
\overline f=\widehat{f}_{u_0}.
\end{align}
Moreover $\widehat{f}_{u_0}=\widehat{f}_{v}$ for all $v\in\inte(\dom f)$ (where $\inte(\dom f)$ is the interior of $\dom f$).
\end{corollary}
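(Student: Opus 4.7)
The plan is to apply Theorem~\ref{GENERAL-FORMULA} to the set $D:=\inte(\dom f)$ and then identify the two sides of its conclusion with $\overline f$ and $\widehat f_{u_0}$. Since $f$ is convex and bounded above in a neighbourhood of $u_0$, classical convex analysis guarantees that $u_0\in D$, that $D$ is an open convex set, and that $f$ is continuous throughout $D$. The hypotheses of Theorem~\ref{GENERAL-FORMULA} are then straightforward to verify: $D$ is strongly star-shaped relative to $u_0$ by the line segment principle (Remark~\ref{convex stars1}~\ref{convex stars}); $f$ is ru-usc in $D$ relative to $u_0$ by Proposition~\ref{convexruusc}; $\inf_D f>-\infty$ is assumed; and $\overline{f+\chi_D}=f$ on $D$ because $f$ is continuous on the open set $D$. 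Theorem~\ref{GENERAL-FORMULA}~\ref{gene1} therefore yields $\overline{f+\chi_D}=\widehat f_{u_0}+\chi_{\overline D}$.

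The main technical step is the identification of the left-hand side with $\overline f$. The inequality $\overline{f+\chi_D}\ge\overline f$ is trivial. For the reverse, I fix $u$ with $\overline f(u)<\infty$ and $\alpha>\overline f(u)$; given any neighbourhood $U$ of $u$ I choose $v_0\in U$ with $f(v_0)<\alpha$ and set $v_t:=tv_0+(1-t)u_0$. Then $v_t\in D$ for every $t\in[0,1)$ by the line segment principle, $v_t\to v_0$ as $t\uparrow 1$ by continuity of the vector operations, and $f(v_t)\le tf(v_0)+(1-t)f(u_0)$ by convexity. Consequently $v_t\in U\cap D$ for $t$ close enough to $1$, and passing to the infimum over $U\cap D$, then letting $t\uparrow 1$ and $\alpha\downarrow\overline f(u)$, gives $\overline{f+\chi_D}(u)\le\overline f(u)$.

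To conclude, the classical fact $\overline{\inte C}=\overline C$ for a convex set $C$ with nonempty interior gives $\overline D=\overline{\dom f}$, while Remark~\ref{domfhat} combined with $\dom\overline f\subset\overline{\dom f}$ yields $\dom\widehat f_{u_0}\subset\overline D$; the indicator term is thus absorbed and we obtain $\overline f=\widehat f_{u_0}$. For the \emph{moreover} assertion, continuity of $f$ on $D$ makes $f$ bounded above in a neighbourhood of every $v\in\inte(\dom f)$, and $\inf_{\inte(\dom f)}f>-\infty$ still holds; running the same argument with $v$ in place of $u_0$ gives $\overline f=\widehat f_v$, and hence $\widehat f_{u_0}=\widehat f_v$. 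The principal obstacle is the identification $\overline{f+\chi_D}=\overline f$ in the middle paragraph, which simultaneously needs the line segment principle (to keep the approximation in $D$) and both convexity and TVS continuity (to control the value and location of $v_t$).
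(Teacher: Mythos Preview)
Your proof is correct and follows the same route as the paper: set $D=\inte(\dom f)$, verify the hypotheses of Theorem~\ref{GENERAL-FORMULA} via Proposition~\ref{convexruusc} and Remark~\ref{convex stars1}~\ref{convex stars}, and then use $\overline{\inte(\dom f)}=\overline{\dom f}$ together with Remark~\ref{domfhat} to drop the indicator. The only substantive difference is that you spell out the identification $\overline{f+\chi_D}=\overline f$, which the paper writes down directly without justification; your convexity-and-line-segment argument for this step is a genuine addition of detail rather than a change of strategy.
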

\begin{proof} The assumption implies that $f$ is continuous on $\inte(\dom f)$ the interior of $\dom f$. So, $\overline{f+\chi_{\inte(\dom f)}}=f$ on $\inte(\dom f)$. By Proposition~\ref{convexruusc} a convex function is ru-usc in $\inte(\dom f)$ relative to all $v\in\inte(\dom f)$. It is well known that $\inte(\dom f)$ is convex, then by Remark~\ref{convex stars1}~\ref{convex stars} the set $\inte(\dom f)$ is strongly star-shaped relative to all $v\in\inte(\dom f)$. By applying Theorem~\ref{GENERAL-FORMULA} we find $\overline f=\widehat{f}_{v}+\chi_{\overline{\inte(\dom f)}}$. On the other hand it holds that $\dom\overline{f}\subset \overline{\dom f}= \overline{\inte(\dom f)}$ since $\dom f$ is convex. Therefore~\eqref{convex extension 2} holds since Remark~\ref{domfhat}.
\end{proof}
\begin{remark} In fact equality~\eqref{convex extension 2} still holds for convex functions which are not bounded below. When $X$ has finite dimension, more general results involving convexity exist, see for instance~\cite[Theorem 7.5 p. 57, Theorem 10.3 p. 85]{rockafellar70}. Indeed, in finite dimension, the relative interior of $\dom f$ is not empty whenever $\dom f\not=\emptyset$ and $f$ is continuous on it, so~\eqref{convex extension 2} holds without any assumption on $f$ unless to be convex.
\end{remark}
\subsection{Minimization with strongly star-shaped constraints}
The following result deals with minimization problems with strongly star-shaped constraints, it can be seen as a nonconvex version of~\cite[Corollary 4.41, p. 272]{fonseca-leoni07}.
\begin{corollary}\label{inf-ruusc-cor} Let $f:X\to ]-\infty,\infty]$ be a function. Let $D\subset \dom f$ be a strongly star-shaped set relative to $u_0\in D$ such that $\overline{D}\subset\dom f$. Assume that $f:X\to ]-\infty,\infty]$ is ru-usc in $\overline D$ relative to $u_0$. If $\overline{f+\chi_D}=f$ on $D$ then
\begin{align*}
\inf_{D}f=\inf_{\overline D}f.
\end{align*}
\end{corollary}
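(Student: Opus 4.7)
The bound $\inf_{\overline D} f \le \inf_D f$ is immediate from $D \subset \overline D$, so what I need to prove is the reverse inequality. If $\inf_D f = -\infty$ there is nothing to show; I therefore reduce to the case $\inf_D f > -\infty$ and aim to establish $f(u) \ge \inf_D f$ pointwise on $\overline D$.

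Fix $u \in \overline D$ and set $\gamma_t := tu + (1-t)u_0$ for $t \in [0,1[$. Strong star-shapedness of $D$ relative to $u_0$ ensures $\gamma_t \in D$, hence
\[
f(\gamma_t) \ge \inf_D f \quad \text{for every } t \in [0,1[.
\]
The ru-usc hypothesis on the \emph{closure} $\overline D$ is exactly the ingredient that converts this into a bound on $f(u)$: given $\epsilon > 0$, I pick $t$ close enough to $1$ so that $\Delta^{a}_{f,\overline D,u_0}(t) \le \epsilon$. Since $u \in \overline D \subset \dom f$, $f(u)$ is finite and the definition of the modulus gives
\[
f(\gamma_t) - f(u) \le \epsilon\bigl(a + |f(u)|\bigr).
\]
Combining the two displays and letting $\epsilon \to 0$ yields $\inf_D f \le f(u)$. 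Taking the infimum over $u \in \overline D$ concludes the proof.

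The main point of the argument — and essentially its only real content — is that the ru-usc modulus must be evaluated with supremum over $\overline D$ rather than over $D$, in order to apply the ru-usc inequality at points $u \in \overline D \setminus D$; the assumption $\overline D \subset \dom f$ is used precisely to keep $|f(u)|$ finite there. The hypothesis $\overline{f + \chi_D} = f$ on $D$ is not strictly required in the direct argument above; it would enter if one preferred to route the proof through Theorem~\ref{GENERAL-FORMULA}~\ref{gene1}, identifying $\widehat f_{u_0}(u) = \overline{f+\chi_D}(u)$ on $\overline D$ and then combining $\widehat f_{u_0}(u) = \lim_{t \uparrow 1} f(\gamma_t) \ge \inf_D f$ with the upper bound $\widehat f_{u_0} \le f$ on $\overline D$ extracted from ru-usc on $\overline D$.
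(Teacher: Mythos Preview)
Your argument is correct, and it is genuinely more direct than the paper's. The paper routes the inequality through Theorem~\ref{GENERAL-FORMULA}~\ref{gene1}: it first uses the general identity $\inf_X\overline{f+\chi_D}=\inf_X(f+\chi_D)=\inf_D f$, then invokes the representation $\overline{f+\chi_D}=\widehat{f}_{u_0}+\chi_{\overline D}$ to rewrite this as $\inf_{\overline D}\widehat{f}_{u_0}$, and finally shows $\widehat{f}_{u_0}\le f$ on $\overline D$ via the ru-usc bound on $\overline D$ --- exactly the estimate you isolate in your last paragraph. You bypass the detour through $\widehat{f}_{u_0}$ entirely: from $f(\gamma_t)\ge\inf_D f$ and $f(\gamma_t)-f(u)\le\eps(a+|f(u)|)$ you pass immediately to $f(u)\ge\inf_D f$. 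Your observation that the hypothesis $\overline{f+\chi_D}=f$ on $D$ is not needed in the direct argument is also correct; the paper uses it only to be able to appeal to Theorem~\ref{GENERAL-FORMULA}~\ref{gene1}. What the paper's route buys is coherence with the surrounding narrative (the corollary is presented as a consequence of the main theorem), while your route shows that the result is in fact more elementary and holds under slightly weaker assumptions.
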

\begin{proof} Without loss of generality we can assume that $\inf_{D}f\ssup-\infty$. It is sufficient to show that $\inf_{D}f\le \inf_{\overline D}f$. By applying Theorem~\ref{GENERAL-FORMULA} we have
\begin{align}\label{inf-ruusc}
\inf_{D}f=\inf_X\overline{f+\chi_D}=\inf_{\overline D}\widehat{f}_{u_0}.
\end{align}
Now we claim that for every $u\in\overline{D}$ we have $\widehat{f}_{u_0}(u)\le f(u)$, indeed, 
\begin{align*}
\widehat{f}_{u_0}(u)=\liminf_{t\uparrow 1}f(tu+(1-t)u_0)\le \limsup_{t\uparrow 1}\Delta_{f,\overline D,u_0}^a(t)(a+\vert f(u)\vert)+f(u)\le f(u)
\end{align*}
since $f$ is ru-usc in $\overline{D}$ relative to $u_0$. We deduce from~\eqref{inf-ruusc} that 
$
\inf_{D}f\le \inf_{\overline D}f,
$
and the proof is complete.
\end{proof}
The following result can be useful in scalar problems of the calculus of variations when the lower semicontinuous envelope $\overline f$ of a nonconvex $f$ is convex.
\begin{corollary} Let $f:X\to ]-\infty,\infty]$ be such that $\overline{f}$ is convex. Let $D\subset \dom f$ be a strongly star-shaped set relative to $u_0\in D$ such that $\overline{D}\subset\dom f$. Then
\begin{align*}
\inf_{D}\overline{f}=\inf_{\overline D}\overline{f}.
\end{align*}
\end{corollary}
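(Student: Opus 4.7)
The plan is to apply Corollary~\ref{inf-ruusc-cor} with $\overline{f}$ in place of $f$. All hypotheses of that corollary transfer to $\overline{f}$ essentially for free once one unpacks the definitions. Since $\overline{f}\le f$, we have $\overline{D}\subset\dom f\subset\dom\overline{f}$, so the containment hypothesis holds. The convexity of $\overline{f}$ combined with Proposition~\ref{convexruusc} (applied to $\overline{D}\subset\dom\overline{f}$ and $u_0\in D\subset \overline{D}$) gives that $\overline{f}$ is ru-usc in $\overline{D}$ relative to $u_0$.

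The only nontrivial thing to check is the penalization equality $\overline{\overline{f}+\chi_D}=\overline{f}$ on $D$. The inequality $\overline{f}\le \overline{\overline{f}+\chi_D}$ holds everywhere: since $\overline{f}\le \overline{f}+\chi_D$, taking lower semicontinuous envelopes yields $\overline{f}=\overline{\overline{f}}\le \overline{\overline{f}+\chi_D}$ (using $\overline{\overline{f}}=\overline{f}$ from Remark~\ref{envelope}). For the reverse inequality, fix $u\in D$; for every $U\in \V(u)$ the point $u$ itself lies in $U\cap D$, so
\begin{align*}
\inf_{v\in U}(\overline{f}+\chi_D)(v)=\inf_{v\in U\cap D}\overline{f}(v)\le \overline{f}(u),
\end{align*}
and taking the supremum over $U\in\V(u)$ gives $\overline{\overline{f}+\chi_D}(u)\le \overline{f}(u)$.

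With the hypotheses of Corollary~\ref{inf-ruusc-cor} verified for $\overline{f}$, its conclusion reads $\inf_{D}\overline{f}=\inf_{\overline{D}}\overline{f}$, which is exactly what we want. The only step that looks like it could cause trouble is the penalization equality, because at first glance one might expect to need some density of $D$ near its own points; in fact the triviality $u\in U\cap D$ for $u\in D$ dispenses with any such assumption, so no real obstacle arises.
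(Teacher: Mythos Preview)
Your proof is correct and follows essentially the same route as the paper: verify the hypotheses of Corollary~\ref{inf-ruusc-cor} for $\overline{f}$, using Proposition~\ref{convexruusc} for the ru-usc property and the sandwich $\overline{f}\le\overline{\overline{f}+\chi_D}\le\overline{f}+\chi_D$ for the penalization equality on $D$. Your reverse inequality argument via $u\in U\cap D$ is fine but slightly more elaborate than needed; the paper simply uses $\overline{g}\le g$ with $g=\overline{f}+\chi_D$, which on $D$ gives $\overline{\overline{f}+\chi_D}\le\overline{f}$ immediately.
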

\begin{proof} We have
\begin{align*}
\overline{f}\le \overline{\overline f+\chi_D}\le \overline f+\chi_D.
\end{align*}
Thus $\overline{\overline f+\chi_D}=\overline{f}$ on $D$. By Proposition~\ref{convexruusc} $\overline f$ is ru-usc in $\overline{D}\subset\dom f$ relative to any $u_0\in \overline{D}$, so we can apply Corollary~\ref{inf-ruusc-cor} with $\overline f$ in place of $f$. The proof is complete.
\end{proof}

\section{Operations on ru-usc functions}\label{2} In this section we study the stability of ru-usc functions with respect to some operations. We also give some examples of class of ru-usc functions. 
\subsection{Stability of ru-usc functions with respect to pointwise sum and product}
We need the following result in the proof of Proposition~\ref{sumproduct}.
\begin{lemma}\label{translation} Let $f:X\to]-\infty,\infty]$ be a ru-usc function in $D\subset\dom f$ relative to $u_0\in D$. Then 
\begin{hyp1}
\item\label{transla} $f+c$ is ru-usc in $D$ relative to $u_0$ for all $c\in\RR$;\\
\item\label{extp} $\lambda f$ is ru-usc in $D$ relative to $u_0$ for all $\lambda\in\RR^+$.
\end{hyp1}
\end{lemma}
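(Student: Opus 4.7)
The plan is to verify the ru-usc definition directly for each operation by choosing an appropriate constant $a'>0$ for the modified function, starting from the ru-usc constant $a>0$ guaranteed for $f$ on $D$ relative to $u_0$.

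For (i), I would set $g:=f+c$ and notice first that the constant cancels in the numerator, so
\[
g(tu+(1-t)u_0)-g(u)=f(tu+(1-t)u_0)-f(u)\quad\text{for every }u\in D,\ t\in[0,1[.
\]
The natural choice is $a':=a+|c|$. By the triangle inequality $|f(u)|\le|f(u)+c|+|c|$, hence $a+|f(u)|\le a'+|g(u)|$. To conclude I would split cases on the sign of the numerator: if it is nonpositive, the corresponding quotient in $\Delta^{a'}_{g,D,u_0}(t)$ is $\le 0$; if it is positive, dividing by the larger denominator $a'+|g(u)|$ can only decrease the quotient, so it is bounded by $\frac{f(tu+(1-t)u_0)-f(u)}{a+|f(u)|}\le\Delta^a_{f,D,u_0}(t)$. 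Taking the supremum over $u\in D$ yields $\Delta^{a'}_{g,D,u_0}(t)\le\max\{0,\Delta^a_{f,D,u_0}(t)\}$, and passing to $\limsup_{t\to 1}$ gives the ru-usc property of $g$.

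For (ii), I would let $g:=\lambda f$. If $\lambda=0$, then $g\equiv 0$ and the conclusion is immediate. If $\lambda>0$, I would set $a':=\lambda a$ and observe the direct scaling identity
\[
\Delta^{\lambda a}_{\lambda f,D,u_0}(t)=\sup_{u\in D}\frac{\lambda\bigl(f(tu+(1-t)u_0)-f(u)\bigr)}{\lambda a+\lambda|f(u)|}=\Delta^{a}_{f,D,u_0}(t),
\]
so $\limsup_{t\to 1}\Delta^{a'}_{g,D,u_0}(t)\le 0$ transfers immediately from $f$ to $\lambda f$.

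The only genuine subtlety—and hence the main step to be careful with—is the sign dichotomy in (i): the numerator may vary in sign across $u\in D$, so neither ``$\le 0$'' nor ``$\le\Delta^a_{f,D,u_0}(t)$'' holds uniformly. Combining them through the bound $\max\{0,\Delta^a_{f,D,u_0}(t)\}$ resolves this, and this maximum still has a nonpositive $\limsup$ as $t\to 1$. Part (ii) is pure scaling and requires no such care.
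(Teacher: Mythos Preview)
Your proof is correct. The core idea---choose a suitable constant $a'$ for the modified function by exploiting $a+|f(u)|\le a'+|g(u)|$---is the same as in the paper, but the execution differs in style.

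For (i), the paper avoids your sign dichotomy by passing through an $\varepsilon$-argument: given $\varepsilon>0$, one has $\Delta^a_{f,D,u_0}(t)<\varepsilon$ for $t$ close to $1$, hence
\[
f_c(tu+(1-t)u_0)-f_c(u)\le \Delta^a_{f,D,u_0}(t)\bigl(a+|f(u)|\bigr)\le \varepsilon\bigl(a+|c|+|f_c(u)|\bigr),
\]
so $\Delta^{a+|c|}_{f_c,D,u_0}(t)\le\varepsilon$ directly, without splitting on the sign of the numerator. Your route instead gives the uniform pointwise bound $\Delta^{a+|c|}_{g,D,u_0}(t)\le\max\{0,\Delta^a_{f,D,u_0}(t)\}$ for all $t$, which is sharper information. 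For (ii), your observation that $\Delta^{\lambda a}_{\lambda f,D,u_0}=\Delta^{a}_{f,D,u_0}$ is an exact identity and is cleaner than the paper's argument, which again goes through $\varepsilon$ and ends up with the constant $a'=1$ and an extra factor $\max\{\lambda a,1\}$. Both approaches are elementary; yours trades the $\varepsilon$-bookkeeping for a small case analysis in (i) and gains a one-line proof of (ii).
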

\begin{proof} {\em Proof of~\ref{transla}.} Fix $u\in D$ and $c\in\RR$. Fix $\eps\ssup 0$. There exists $t_\eps\in ]0,1[$ such that $\sup_{t\in ]t_\eps,1[}\Delta_{f,D,u_0}^a(t)\sinf\eps$ since $f$ is ru-usc in $D$ relative to $u_0$.

We set $f_c:=f+c$. Then for every $t\in ]t_\eps,t[$ we have
\begin{align}\label{eqc}
f_c(tu+(1-t)u_0)-f_c(u)&=f(tu+(1-t)u_0)-f(u)\\
&\le \Delta_{f,D,u_0}^a(t)(a+\vert f(u)\vert)\notag\\
&\le \eps(a+\vert f_c(u)\vert+\vert c\vert).\notag
\end{align}
It follows that $\limsup_{t\to 1}\Delta_{f_c,D,u_0}^{a+\vert c\vert}(t)\le  \eps$. The proof of~\ref{transla} is complete by letting $\eps\to 0$. 

\medskip

\noindent{\em Proof of~\ref{extp}.} Fix $u\in D$ and $\lambda\in\RR^+$. Fix $\eps\ssup 0$. There exists $t_\eps\in ]0,1[$ such that $\sup_{t\in ]t_\eps,1[}\Delta_{f,D,u_0}^a(t)\sinf\eps$ since $f$ is ru-usc in $D$ relative to $u_0$. 

We set $f_\lambda:=\lambda f$. Then for every $t\in ]t_\eps,t[$ we have
\begin{align*}
f_\lambda (tu+(1-t)u_0)-f_\lambda(u)&=\lambda\left(f(tu+(1-t)u_0)-f(u)\right)\\
&\le \Delta_{f,D,u_0}^a(t)(\lambda a+\vert f_\lambda(u)\vert)\notag\\
&\le \eps\max\{\lambda a,1\}(1+\vert f_\lambda(u)\vert).\notag
\end{align*}
It follows that $\limsup_{t\to 1}\Delta_{f_\lambda,D,u_0}^{1}(t)\le  \eps\max\{\lambda a,1\}$. The proof of~\ref{extp} is complete by letting $\eps\to 0$.
\end{proof}
The stability for the operations sum and product (pointwise) of ru-usc functions is specified below.
\begin{proposition}\label{sumproduct} Let $f,g:X\to ]-\infty,\infty]$ be two ru-usc functions in $D\subset\dom f\cap\dom g$ relative to $u_0\in D$. 
\begin{hyp1}
\item\label{sum} If $\inf_D f\ssup -\infty$ and $\inf_D g\ssup -\infty$ then $f+g$ is ru-usc in $D$ relative to $u_0$;\\
\item\label{product} If $\inf_D f\ssup 0$ and $\inf_D g\ssup 0$ then $fg$ is ru-usc in $D$ relative to $u_0$.
\end{hyp1}
\end{proposition}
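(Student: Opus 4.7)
For both statements the plan is to decompose the increment $(f\star g)(v)-(f\star g)(u)$, where $v=tu+(1-t)u_0$ and $\star$ stands either for $+$ or for $\cdot$, into pieces controlled individually through $\Delta^{a_f}_{f,D,u_0}(t)$ and $\Delta^{a_g}_{g,D,u_0}(t)$, and then to repackage the resulting bound as $\eta(t)\bigl(a+|(f\star g)(u)|\bigr)$ with $\limsup_{t\to 1}\eta(t)\le 0$.

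For~\ref{sum}, since $m_f:=\inf_D f$ and $m_g:=\inf_D g$ are finite, I would first translate to the nonnegative case. By Lemma~\ref{translation}~\ref{transla}, $\tilde f:=f-m_f$ and $\tilde g:=g-m_g$ are ru-usc in $D$ relative to $u_0$ with some constants $\tilde a_f,\tilde a_g$, and they are nonnegative on $D$, so $|\tilde f(u)|+|\tilde g(u)|=|(\tilde f+\tilde g)(u)|$ on $D$. Given $\eps>0$, choose $t_\eps\in\,]0,1[$ so that $\Delta^{\tilde a_f}_{\tilde f,D,u_0}(t),\Delta^{\tilde a_g}_{\tilde g,D,u_0}(t)\le\eps$ on $]t_\eps,1[$; the additive splitting of the increment combined with the ru-usc bounds on $\tilde f$ and $\tilde g$ then gives
\begin{align*}
(\tilde f+\tilde g)(v)-(\tilde f+\tilde g)(u)\le\eps\bigl(\tilde a_f+\tilde a_g+|(\tilde f+\tilde g)(u)|\bigr),
\end{align*}
so $\Delta^{\tilde a_f+\tilde a_g}_{\tilde f+\tilde g,D,u_0}(t)\le\eps$ on $]t_\eps,1[$. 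Sending $\eps\to 0$ yields ru-usc of $\tilde f+\tilde g$, and Lemma~\ref{translation}~\ref{transla} applied with the constant $m_f+m_g$ transfers this to $f+g$.

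For~\ref{product}, the positivity $f(u)\ge c_f>0$ and $g(u)\ge c_g>0$ on $D$ gives $|(fg)(u)|=f(u)g(u)\ge c_fc_g$. Given $\eps>0$, choose $t_\eps\in\,]0,1[$ so that $\Delta^{a_f}_{f,D,u_0}(t),\Delta^{a_g}_{g,D,u_0}(t)\le\eps$ on $]t_\eps,1[$; equivalently, the pointwise upper estimates $f(v)\le(1+\eps)f(u)+a_f\eps$ and $g(v)\le(1+\eps)g(u)+a_g\eps$ hold. Multiplying these two estimates, subtracting $f(u)g(u)$ and expanding yields a bound of the form
\begin{align*}
f(v)g(v)-f(u)g(u)\le(2\eps+\eps^2)f(u)g(u)+(1+\eps)\eps\bigl[a_g f(u)+a_f g(u)\bigr]+a_fa_g\eps^2,
\end{align*}
and the linear cross terms are absorbed into $f(u)g(u)$ via $f(u)\le f(u)g(u)/c_g$ and $g(u)\le f(u)g(u)/c_f$, leaving a right-hand side dominated by $C\eps\bigl(1+|(fg)(u)|\bigr)$ for some $C=C(a_f,a_g,c_f,c_g)$. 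Hence $\Delta^{1}_{fg,D,u_0}(t)\le C\eps$ on $]t_\eps,1[$; letting $\eps\to 0$ completes the argument.

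The step I expect to be most delicate is the passage from upper bounds on $f(v)$ and $g(v)$ to a product bound on $f(v)g(v)$ in~\ref{product}: monotonicity of multiplication in each factor is only automatic on the positive orthant, so the positivity hypotheses $c_f,c_g>0$ are essential both to guarantee $|(fg)(u)|=f(u)g(u)$ and to enable the inverse-bound trick $f(u)\le f(u)g(u)/c_g,\,g(u)\le f(u)g(u)/c_f$ that absorbs the cross terms into $f(u)g(u)$.
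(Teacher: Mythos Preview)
Your proposal is correct and takes essentially the same approach as the paper: for~\ref{sum} both arguments reduce to nonnegative functions via Lemma~\ref{translation} and then add the two ru-usc estimates, exploiting $|\tilde f(u)|+|\tilde g(u)|=|(\tilde f+\tilde g)(u)|$ (the only cosmetic differences are that the paper treats the nonnegative case first and translates afterwards, and packages the constant as $a=1$ with a $\max$-factor rather than your $a=\tilde a_f+\tilde a_g$); for~\ref{product} both multiply the two pointwise upper bounds and absorb the linear cross terms $f(u),g(u)$ into $f(u)g(u)$ via the same inverse-bound trick $f(u)+g(u)\le f(u)g(u)\bigl(\tfrac{1}{\inf_D f}+\tfrac{1}{\inf_D g}\bigr)$. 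Your closing remark correctly flags the product step as the delicate one, and the paper handles it in exactly the same way.
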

\begin{proof} {\em Proof of~\ref{sum}}. Assume first that $f\ge 0$ and $g\ge 0$ on $D$. Fix $\eps\ssup0$. There exists $t_\eps\in ]0,1[$ such that $\sup_{t\in ]t_\eps,1[} \Delta_{f,D,u_0}^a(t)\sinf\eps$ (resp. $\sup_{t\in ]t_\eps,1[} \Delta_{g,D,u_0}^b(t)\sinf\eps$) since $f$ (resp. $g$) is ru-usc in $D$ relative to $u_0$. Fix $t\in ]t_\eps,1[$ and $u\in D$. We have
\begin{align} 
f(tu+(1-t)u_0)\le \eps\left(a+f(u)\right)+f(u)\label{fruusc}\\
g(tu+(1-t)u_0)\le \eps\left(b+g(u)\right)+g(u)\label{gruusc},
\end{align}
 so by adding~\eqref{fruusc} with~\eqref{gruusc} we obtain
\begin{align*}
f(tu+(1-t)u_0)+g(tu+(1-t)u_0)\le \delta(\eps)(1+f(u)+g(u))+f(u)+g(u)
\end{align*}
 where $\delta(\eps):=\eps\max\{a+b,1\}$ and satisfies $\lim_{\eps\to 0}\delta(\eps)=0$. Therefore \[\limsup_{t\to 1}\Delta^1_{f+g,D,u_0}(t)\le \delta(\eps)\] which gives the result by letting $\eps\to 0$.

We now remove the restrictions made on $f$ and $g$ and we set $m:=\inf_D f+\inf_D g$. We denote by $f^+:=f-\inf_D f$ and $g^+:=g-\inf_Dg$. By Lemma~\ref{translation}~\ref{transla} the functions $f^+$ and $g^+$ are ru-usc in $D$ relative to $u_0$, so by applying the first part of the Proof of~\ref{sum} the function $f^++g^+$ is ru-usc in $D$ relative to $u_0$. But $f^++g^++m=f+g$, so again by applying Lemma~\ref{translation} we find that $f+g$ is ru-usc in $D$ relative to $u_0$. 
 
\noindent{\em Proof of~\ref{product}}. By taking the product of~\eqref{fruusc} with~\eqref{gruusc} we obtain
\begin{align}\label{eqproduct}
f(tu+(1-t)u_0)g(tu+(1-t)u_0)\le \delta(\eps)(1+f(u)+g(u)+f(u)g(u))+f(u)g(u)
\end{align}
where $\delta(\eps):=\max\left\{\eps,\eps^2\right\}\max\{ab,2a,2b,3\}$ and satisfies $\lim_{\eps\to 0}\delta(\eps)= 0$. But 
\begin{align}\label{eqproduct1}
f(u)+g(u)\le f(u)g(u)\left(\frac{1}{\inf_D f}+\frac{1}{\inf_Dg}\right).
\end{align}
From~\eqref{eqproduct} and~\eqref{eqproduct1} we deduce that
\begin{align*}
f(tu+(1-t)u_0)g(tu+(1-t)u_0)\le &\delta(\eps)\left(1+\frac{1}{\inf_D f}+\frac{1}{\inf_Dg}\right)(1+f(u)g(u))\\&+f(u)g(u),
\end{align*}
so $\limsup_{t\to 1}\Delta^1_{fg,D,u_0}(t)\le \delta(\eps)\left(1+\frac{1}{\inf_D f}+\frac{1}{\inf_Dg}\right)$ which gives the result by letting $\eps\to 0$.

\end{proof}

\begin{remark} The Proposition~\ref{sumproduct}~\ref{product} answers to the question whether class of ru-usc functions contains more than convex functions, indeed it is sufficient to consider two finite positive convex functions such that their pointwise product is not convex.
\end{remark}

\begin{example}\label{remholder} Assume that $X$ is a normed space. Let $g:X\to \RR$ be a function satisfying:
\begin{hypA}
\item\label{A1} there exist $\alpha\ge 0$ and a function $\delta:[0,1]\to \RR$ satisfying $\limsup_{t\to 1}\delta(t)\le 0$ such that for every $t\in ]0,1[$ and every $u\in X$
\begin{align*}
\vert g(tu+(1-t)u_0)-g(u)\vert\le \delta(t)(1+\Vert u\Vert^\alpha+\Vert u_0\Vert^\alpha);
\end{align*}
\item\label{A2} there exist $c\ssup 0$, $c^\prime\ge 0$, and $\beta\ge \alpha$ such that for every $u\in X$
\begin{align*}
c\Vert u\Vert^\beta-c^\prime \le g(u);
\end{align*}
\end{hypA}
then $g$ is ru-usc in $D$ relative to $u_0$. Indeed, by~\ref{A2} we have $\Vert u\Vert^\alpha\le \Vert u\Vert^\beta+1\le\frac{1}{c}\left(g(u)+c^\prime\right)+1$. Set $a:=c^\prime+c(2+\Vert u_0\Vert^\alpha)$ then it is easy to deduce from~\ref{A1} that for every $u\in D$ and every $t\in [0,1[$
\begin{align*}
g(tu+(1-t)u_0)-g(u)\le \frac{\delta(t)}{c}\left(a+g(u)\right).
\end{align*}
Therefore $\Delta^{a}_{g,D,u_0}(t)\le \frac{\delta(t)}{c}$ which shows,  by letting $t\to 1$, that $g$ is ru-usc in $D$ relative to $u_0$. Note also that $\inf_X g\ssup -\infty$.  
\end{example}

\begin{corollary}\label{rh} Assume that $X$ is a normed vector space. Let $f:X\to ]-\infty,\infty]$ be a ru-usc function in $D\subset \dom f$ relative to $u_0\in D$. If $\inf_D f\ssup-\infty$ and if $g:X\to ]-\infty,\infty]$ satisfies~\ref{A1} and~\ref{A2} then $f+g$ is ru-usc in $D$ relative to $u_0$.
\end{corollary}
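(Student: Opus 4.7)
The plan is to obtain Corollary~\ref{rh} as an immediate consequence of two results already available in the excerpt: Example~\ref{remholder} and Proposition~\ref{sumproduct}\ref{sum}. The structure is to first reduce the hypothesis on $g$ to the abstract ru-usc framework, then apply the stability of ru-usc under pointwise sums.

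First I would invoke Example~\ref{remholder}: since $g$ satisfies \ref{A1} and \ref{A2}, the example (together with its justification) establishes that $g$ is ru-usc in $D$ relative to $u_0$ and, moreover, that $\inf_X g \ssup -\infty$. In particular we obtain $\inf_D g \ssup -\infty$, which is precisely the boundedness-from-below hypothesis required in Proposition~\ref{sumproduct}\ref{sum}.

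Second, with $f$ ru-usc in $D$ relative to $u_0$ and $\inf_D f \ssup -\infty$ by hypothesis, and $g$ ru-usc in $D$ relative to $u_0$ with $\inf_D g \ssup -\infty$ as just observed, all the assumptions of Proposition~\ref{sumproduct}\ref{sum} are met. Applying it directly yields that $f+g$ is ru-usc in $D$ relative to $u_0$, which is the desired conclusion. There is no real obstacle to overcome here; the only subtlety is noticing that Example~\ref{remholder} provides not merely ru-usc of $g$ but also the global lower bound $\inf_X g \ssup -\infty$ coming from \ref{A2}, so that the boundedness hypothesis of Proposition~\ref{sumproduct}\ref{sum} is automatically satisfied without any extra assumption on $g$.
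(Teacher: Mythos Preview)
Your proposal is correct and matches the intended argument: the paper states Corollary~\ref{rh} without an explicit proof precisely because it follows immediately by combining Example~\ref{remholder} (which yields that $g$ is ru-usc in $D$ relative to $u_0$ and that $\inf_X g>-\infty$) with Proposition~\ref{sumproduct}\ref{sum}. Your observation that the lower bound $\inf_D g>-\infty$ comes for free from \ref{A2} is exactly the point needed to apply the proposition.
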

\begin{remark} Corollary~\ref{rh} can be seen as stability of ru-usc functions with respect to a type of ``radial H\"older" perturbation.
\end{remark}

The following result is an alternative to the Proposition~\ref{sumproduct}~\ref{sum}.

\begin{proposition}\label{boundedruusc+ruusc} Let $f,g:X\to ]-\infty,\infty]$ be two ru-usc functions in $D\subset\dom f\cap\dom g$ relative to $u_0\in D$. If $g$ is bounded on $D$, i.e., $\sup_{u\in D}\vert g(u)\vert\sinf\infty$ then $f+g$ is ru-usc in $D$ relative to $u_0$.
\end{proposition}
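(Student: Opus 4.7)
The plan is a direct $\varepsilon$--argument following the same template used in the proof of Proposition~\ref{sumproduct}~\ref{sum}. Since $f$ is ru-usc in $D$ relative to $u_0$, there is some $a>0$ with $\limsup_{t\to 1}\Delta^a_{f,D,u_0}(t)\le 0$; likewise some $b>0$ for $g$. Set $M:=\sup_{u\in D}\lvert g(u)\rvert<\infty$. I want to find a single constant $c>0$ such that $\limsup_{t\to 1}\Delta^c_{f+g,D,u_0}(t)\le 0$.

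Fix $\varepsilon>0$. By the ru-usc property, pick $t_\varepsilon\in\,]0,1[$ so that both $\Delta^a_{f,D,u_0}(t)<\varepsilon$ and $\Delta^b_{g,D,u_0}(t)<\varepsilon$ for every $t\in\,]t_\varepsilon,1[$. For any such $t$ and any $u\in D$, the sup inequality (together with the trivial upper bound $\varepsilon$ in the sign-change case) gives
\begin{align*}
f(tu+(1-t)u_0)-f(u)&\le \varepsilon(a+\lvert f(u)\rvert),\\
g(tu+(1-t)u_0)-g(u)&\le \varepsilon(b+\lvert g(u)\rvert)\le \varepsilon(b+M).
\end{align*}
Adding these two inequalities and bounding $\lvert f(u)\rvert\le\lvert (f+g)(u)\rvert+\lvert g(u)\rvert\le \lvert(f+g)(u)\rvert+M$ (this is exactly where the boundedness of $g$ is used to absorb $f$ into $f+g$), I obtain
\begin{align*}
(f+g)(tu+(1-t)u_0)-(f+g)(u)\le \varepsilon\bigl(a+b+2M+\lvert(f+g)(u)\rvert\bigr).
\end{align*}

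Setting $c:=a+b+2M$ yields $\Delta^c_{f+g,D,u_0}(t)\le\varepsilon$ for every $t\in\,]t_\varepsilon,1[$, hence $\limsup_{t\to 1}\Delta^c_{f+g,D,u_0}(t)\le \varepsilon$. Letting $\varepsilon\downarrow 0$ shows that $f+g$ is ru-usc in $D$ relative to $u_0$ with modulus parameter $c$. There is no real obstacle: the only delicate point is the choice of the single constant $c$, which is forced by the need to control $\lvert f(u)\rvert$ from $\lvert(f+g)(u)\rvert$, and this is exactly what the hypothesis $\sup_D\lvert g\rvert<\infty$ provides. Note in particular that, unlike Proposition~\ref{sumproduct}~\ref{sum}, no lower bound on $f$ is required here, because the translation trick of Lemma~\ref{translation} is replaced by the direct absorption $\lvert f\rvert\le \lvert f+g\rvert+M$.
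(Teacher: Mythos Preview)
Your proof is correct and follows essentially the same route as the paper's: add the two ru-usc inequalities for $f$ and $g$, then use the triangle-inequality absorption $\lvert f(u)\rvert\le\lvert(f+g)(u)\rvert+M$ to express everything in terms of $\lvert(f+g)(u)\rvert$. The only cosmetic difference is bookkeeping of constants---the paper keeps the modulus parameter equal to $1$ and collects the extra factors $a+b$ and $2M$ into a multiplicative constant in front of $\varepsilon$, whereas you push them into the single constant $c=a+b+2M$; your packaging is arguably tidier. The parenthetical about a ``sign-change case'' is unnecessary (the inequality $f(tu+(1-t)u_0)-f(u)\le\varepsilon(a+\lvert f(u)\rvert)$ follows directly from $\Delta^a_{f,D,u_0}(t)<\varepsilon$ with no case split), but it does no harm.
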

\begin{proof} Fix $\eps\ssup0$. There exists $t_\eps\in ]0,1[$ such that $\sup_{t\in ]t_\eps,1[} \Delta_{f,D,u_0}^a(t)\sinf\eps$ (resp. $\sup_{t\in ]t_\eps,1[} \Delta_{g,D,u_0}^b(t)\sinf\eps$) since $f$ (resp. $g$) is ru-usc in $D$ relative to $u_0$. Fix $t\in ]t_\eps,1[$ and $u\in D$. We have
\begin{align} 
f(tu+(1-t)u_0)\le \eps\left(a+\lvert f(u)\rvert\right)+f(u)\label{fruusc1}\\
g(tu+(1-t)u_0)\le \eps\left(b+\lvert g(u)\rvert\right)+g(u)\label{gruusc1},
\end{align}
so by adding~\eqref{fruusc1} with~\eqref{gruusc1} we obtain
\begin{align*}
&f(tu+(1-t)u_0)+g(tu+(1-t)u_0)\\
&\le \delta(\eps)(1+\vert f(u)\vert+\vert g(u)\vert)+f(u)+g(u)\\
&\le \delta(\eps)(1+\vert f(u)+g(u)\vert+2\sup_{u\in D}\vert g(u)\vert)+f(u)+g(u)\\
&\le\delta(\eps)\max\left\{1,2\sup_{u\in D}\vert g(u)\vert\right\}\left(1+\vert f(u)+g(u)\vert\right) +f(u)+g(u)
\end{align*}
where $\delta(\eps):=\eps\max\{a+b,1\}$, and satisfies $\lim_{\eps\to 0}\delta(\eps)=0$. Then 
\[
\limsup_{t\to1}\Delta^1_{f+g,D,u_0}(t)\le \delta(\eps)\max\left\{1,2\sup_{u\in D}\vert g(u)\vert\right\}
\] 
which gives the result by letting $\eps\to 0$.
\end{proof}
\begin{example}\label{uniformc} Assume that $X$ is a normed vector space. Let $D\subset X$ be a compact and strongly star-shaped set relative to $u_0\in D$. Let $g:X\to ]-\infty,\infty]$ be a function which is continuous and finite on $D$. Then $g$ is ru-usc in $D$ relative to $u_0$. Indeed, let $\omega:[0,\infty[\to\RR$ be defined by
\begin{align*}
\omega(\delta)=\sup\{\vert g(u)-g(v)\vert:u,v\in {D}\mbox{ and }\Vert u-v\Vert\sinf\delta\}.
\end{align*}
We have $\lim_{\delta\to 0}\omega(\delta)=0$ since $g$ is continuous and $D$ is compact. We have for every $u\in D$ and every $t\in [0,1[$
\begin{align*}
g(tu+(1-t)u_0)-g(u)\le \omega\left((1-t)\sup_{u\in{D}}\Vert u-u_0\Vert\right).
\end{align*}
Therefore we have $\Delta_{g,D,u_0}^1(t)\le \omega\left((1-t)\sup_{u\in{D}}\Vert u-u_0\Vert\right)$ for all $t\in [0,1[$. Passing to the limit $t\to 1$ we obtain that $g$ is ru-usc in $D$ relative to $u_0$.
\end{example}
By using Example~\ref{uniformc} and Proposition~\ref{boundedruusc+ruusc} we establish, in the following result, that the class of ru-usc functions are stable with respect to a continuous perturbation when $D$ is strongly star-shaped and compact.
\begin{corollary} Assume that $X$ is a normed vector space and $D\subset X$ is a compact and strongly star-shaped set relative to $u_0\in D$. Let $f:X\to ]-\infty,\infty]$ be a ru-usc function in $D\subset\dom f$ relative to $u_0$. If $g:X\to \RR$ is a continuous function then $f+g$ is ru-usc in $D$ relative to $u_0$.
\end{corollary}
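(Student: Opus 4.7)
The plan is to prove this by combining the two immediately preceding results: Example~\ref{uniformc} (which turns continuous functions on compact star-shaped sets into ru-usc functions) and Proposition~\ref{boundedruusc+ruusc} (which provides stability of ru-usc under addition by a bounded ru-usc function). There is no new technical difficulty to tackle; the whole proof is an assembly of ingredients already established.

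More precisely, I would proceed in three short steps. First, since $g:X\to\RR$ is continuous and finite on $D$, and $D$ is compact and strongly star-shaped relative to $u_0$, Example~\ref{uniformc} applies verbatim and yields that $g$ is ru-usc in $D$ relative to $u_0$. Second, again using continuity of $g$ together with compactness of $D$, the restriction of $g$ to $D$ is bounded, i.e., $\sup_{u\in D}\vert g(u)\vert<\infty$. Third, we have two ru-usc functions $f$ and $g$ in $D\subset \dom f\cap\dom g$ relative to $u_0$, one of which ($g$) is bounded on $D$; hence Proposition~\ref{boundedruusc+ruusc} gives directly that $f+g$ is ru-usc in $D$ relative to $u_0$.

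The only point that deserves a quick sanity check is that $g$ is indeed defined and finite on all of $D$ (so that $D\subset \dom g$ and the boundedness statement makes sense), which is guaranteed by the hypothesis $g:X\to\RR$. Apart from this trivial verification, no obstacle is expected: the corollary is essentially a one-line consequence of the previously proved ingredients, and this is precisely how the paragraph preceding its statement advertises the argument.
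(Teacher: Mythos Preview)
Your proposal is correct and follows exactly the approach indicated in the paper: invoke Example~\ref{uniformc} to see that $g$ is ru-usc (and bounded) on the compact set $D$, then apply Proposition~\ref{boundedruusc+ruusc} to conclude. There is nothing to add.
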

\subsection{Inf-convolution of ru-usc functions}
For two functions $f,g:X\to ]-\infty,\infty]$ their inf-convolution is the function $f\triangledown g:X\to [-\infty,\infty]$ defined by
\[
\left(f\triangledown g\right)(u):=\inf_{v\in X}\left\{f(u-v)+g(v)\right\}.
\]
The following result establishes the conditions to keep the ru-usc property by the inf-convolution operation.
\begin{proposition}\label{inf-convol} Let $f:X\to ]-\infty,\infty]$ be a ru-usc function relative to $u_0\in \dom f$. Let $g:X\to ]-\infty,\infty]$ be a ru-usc function relative to $0\in \dom g$. Then $f\triangledown g$ is ru-usc relative to $u_0$ if one of the following conditions holds:
\begin{hyp1}
\item\label{minconv} $\inf_{X} f\ssup -\infty$ and $\inf_{X} g\ssup -\infty$;\\

\item\label{convboun} $\displaystyle\sup_{v\in\dom g}\vert g(v)\vert\sinf\infty$.
\end{hyp1}
\end{proposition}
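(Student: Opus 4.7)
The plan is to exploit the homothety built into the inf-convolution via the change of variable $v=tw$ (a bijection of $X$ for every $t>0$), which rewrites
\[(f\triangledown g)(tu+(1-t)u_0)=\inf_{w\in X}\bigl\{f(t(u-w)+(1-t)u_0)+g(tw+(1-t)\cdot 0)\bigr\}.\]
In this form the argument of $f$ lies on the segment from $u-w$ to $u_0$ and the argument of $g$ on the segment from $w$ to $0$, so the ru-usc inequalities of $f$ at $u_0$ and of $g$ at $0$ apply summandwise: whenever $u-w\in\dom f$ and $w\in\dom g$,
\[f(t(u-w)+(1-t)u_0)+g(tw)\le f(u-w)+g(w)+\Delta^a_{f,u_0}(t)(a+|f(u-w)|)+\Delta^b_{g,0}(t)(b+|g(w)|).\]

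Given $\varepsilon>0$ and $u\in\dom(f\triangledown g)$ with $(f\triangledown g)(u)\in\RR$, I would pick an $\varepsilon$-almost minimizer $w_\varepsilon\in X$ satisfying $f(u-w_\varepsilon)+g(w_\varepsilon)\le(f\triangledown g)(u)+\varepsilon$ (which automatically lies in $\dom g$ with $u-w_\varepsilon\in\dom f$, the sum being finite) and feed it into the previous inequality to obtain
\[(f\triangledown g)(tu+(1-t)u_0)-(f\triangledown g)(u)\le\varepsilon+\Delta^a_{f,u_0}(t)(a+|f(u-w_\varepsilon)|)+\Delta^b_{g,0}(t)(b+|g(w_\varepsilon)|).\]
The main obstacle is to bound $|f(u-w_\varepsilon)|$ and $|g(w_\varepsilon)|$ by an affine function of $|(f\triangledown g)(u)|$ with coefficients independent of $u$, and this is precisely the role of the two alternative hypotheses. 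Under~\ref{minconv}, the finite quantities $m_f:=\inf_X f$ and $m_g:=\inf_X g$ provide both lower bounds ($f(u-w_\varepsilon)\ge m_f$, $g(w_\varepsilon)\ge m_g$) and upper bounds ($f(u-w_\varepsilon)\le(f\triangledown g)(u)+\varepsilon-m_g$, and symmetrically for $g$). Under~\ref{convboun}, $|g(w_\varepsilon)|\le M:=\sup_{v\in\dom g}|g(v)|$ is automatic, while $f(u-w_\varepsilon)$ is sandwiched between the tautological lower bound $(f\triangledown g)(u)-M$ (from $f(u-w)+g(w)\ge(f\triangledown g)(u)$ for every $w$) and the upper bound $(f\triangledown g)(u)+\varepsilon+M$.

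In both regimes one can therefore fix $c>0$, depending only on $a,b$ and on $m_f,m_g$ or on $M$, such that $a+|f(u-w_\varepsilon)|\le c+|(f\triangledown g)(u)|+\varepsilon$ and $b+|g(w_\varepsilon)|\le c+|(f\triangledown g)(u)|+\varepsilon$. Given any $\eta>0$, the ru-usc hypotheses on $f$ and $g$ supply $t_\eta\in[0,1[$ with $\Delta^a_{f,u_0}(t)\le\eta$ and $\Delta^b_{g,0}(t)\le\eta$ for $t\in\,]t_\eta,1[$; dividing the preceding estimate by $c+|(f\triangledown g)(u)|\ge c$ and taking the supremum over $u$ then yields $\Delta^c_{f\triangledown g,u_0}(t)\le\varepsilon/c+2\eta(1+\varepsilon/c)$ on $]t_\eta,1[$. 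Letting first $\eta\to 0$ (equivalently $t\to 1$) and then $\varepsilon\to 0$ produces $\limsup_{t\to 1}\Delta^c_{f\triangledown g,u_0}(t)\le 0$, which is the required ru-usc property of $f\triangledown g$ relative to $u_0$.
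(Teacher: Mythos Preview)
Your argument is correct and follows the same core idea as the paper's proof: use the substitution $v=tw$ to write $(f\triangledown g)(tu+(1-t)u_0)\le f(t(u-w)+(1-t)u_0)+g(tw)$, apply the ru-usc estimates for $f$ at $u_0$ and for $g$ at $0$ summandwise, and plug in an almost minimizer of $(f\triangledown g)(u)$.

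The only structural difference is in the handling of~\ref{minconv}. The paper first reduces to the case $f\ge 0$, $g\ge 0$ via the translation lemma (so that $|f(u-v_n)|=f(u-v_n)$, $|g(v_n)|=g(v_n)$, and the sum $f(u-v_n)+g(v_n)$ directly dominates each term), proves ru-usc for the shifted functions, and then translates back. You instead bound $|f(u-w_\varepsilon)|$ and $|g(w_\varepsilon)|$ directly using the two-sided sandwich $m_f\le f(u-w_\varepsilon)\le (f\triangledown g)(u)+\varepsilon-m_g$ (and symmetrically for $g$). This lets you treat~\ref{minconv} and~\ref{convboun} in a unified way, at the cost of carrying the constants $m_f,m_g$ explicitly; the paper's reduction is a bit cleaner algebraically but requires invoking the translation lemma twice. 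Both yield the same conclusion with the same quantitative dependence.
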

\begin{proof} {\em Proof of~\ref{minconv}.} Assume first that $f\ge 0$ and $g\ge 0$. Fix $u\in \dom f\triangledown g$. Choose $\{v_n\}_n\subset \dom g$ such that $f(u-v_n)+g(v_n)\to \left(f\triangledown g\right)(u)$ as $n\to\infty$. Fix $\eps\ssup0$. There exists $t_\eps\in ]0,1[$ such that $\sup_{t\in ]t_\eps,1[} \Delta_{f,D,u_0}^a(t)\sinf\eps$ (resp. $\sup_{t\in ]t_\eps,1[} \Delta_{g,D,u_0}^b(t)\sinf\eps$) since $f$ (resp. $g$) is ru-usc in $D$ relative to $u_0$. Fix $t\in ]t_\eps,1[$. We have for every $n\in\NN$ 
\begin{align}\label{eqinf0}
\left(f\triangledown g\right)(tu+(1-t)u_0)&\le f(t(u-v_n)+(1-t)u_0)+g(tv_n)\\
&\le\eps(a+f(u+v_n))+\eps(b+g(v_n))\notag\\
&+f(u-v_n)+g(v_n)\notag\\
&\le\delta(\eps)(1+f(u-v_n)+g(v_n))\notag\\
&+f(u-v_n)+g(v_n).\notag
\end{align}
where $\delta(\eps):= \eps\max\{1,a+b\}$ and satisfies $\limsup_{\eps\to 0}\delta(\eps)\le 0$. Letting $n\to\infty$ in~\eqref{eqinf0} we obtain $\limsup_{t\to 1}\Delta_{f\triangledown g,u_0}^{1}(t)\le \delta(\eps)$ which gives, by letting $\eps\to 0$, that $f\triangledown g$ is ru-usc relative to $u_0$. 

We remove the restrictions on $f$ and $g$. Set $f^+:= f-\inf_X f$ and $g^+:= g-\inf_X f$. By Lemma~\ref{translation} the function $f^+$ (resp. $g^+$) is ru-usc relative to $u_0$ (resp. relative to $0$), so we apply the first part of the proof to have $f^+\triangledown g^+$ is ru-usc relative to $u_0$. But $f^+\triangledown g^++(\inf_X f+\inf_X g)=f\triangledown g$, so again by Lemma~\ref{translation} we deduce that $f\triangledown g$ is ru-usc relative to $u_0$.

\noindent{\em Proof of~\ref{convboun}.} Choose $\{v_n\}_n\subset \dom g$ such that $f(u-v_n)+g(v_n)\to \left(f\triangledown g\right)(u)$ as $n\to\infty$. Fix $t\in ]t_\eps,1[$. Then for every $n\in\NN$
\begin{align}\label{eqinf}
\left(f\triangledown g\right)(tu+(1-t)u_0)&\le f(t(u-v_n)+(1-t)u_0)+g(tv_n)\\
&\le\eps(a+\vert f(u-v_n)\vert)+\eps(b+\vert g(v_n)\vert )\notag\\
&+f(u-v_n)+g(v_n)\notag\\
&\le\delta(\eps)(1+\vert f(u-v_n)\vert+\vert g(v_n)\vert)\notag\\
&+f(u-v_n)+g(v_n)\notag\\
&\le\delta(\eps)\left(1+\vert f(u-v_n)+g(v_n)\vert+2\sup_{v\in\dom g}\vert g(v)\vert\right)\notag\\
&+f(u-v_n)+g(v_n)\notag
\end{align}
where $\delta(\eps):= \eps\max\{1,a+b\}$ and satisfies $\lim_{\eps\to 0}\delta(\eps)=0$. Letting $n\to\infty$ in~\eqref{eqinf} we obtain $\limsup_{t\to1}\Delta_{f\triangledown g,u_0}^{1}(t)\le \delta(\eps)\left(1+2\sup_{v\in\dom g}\vert g(v)\vert\right)$ which gives, by letting $\eps\to 0$, that $f\triangledown g$ is ru-usc relative to $u_0$. 
\end{proof}

If $g=\chi_C$ with $C\subset X$ a strongly star-shaped set relative to $0\in C$, then $g$ is ru-usc relative to $0$ since Remark~\ref{indicator-ruusc}. By noticing that $\sup_{v\in\dom g}\vert g(v)\vert=0\sinf\infty$, we may apply Proposition~\ref{inf-convol}~\ref{convboun} to obtain that the function $X\ni u\mapsto \left(f\triangledown \chi_C\right)(u)=\inf_{v\in C}f(u-v)$ is ru-usc when $f$ is ru-usc relative to $u_0\in\dom f$.

\begin{corollary} Let $f:X\to ]-\infty,\infty]$ be a ru-usc function relative to $u_0\in \dom f$. Let $C\subset X$ be a strongly star-shaped set relative to $0\in C$. Then $f\triangledown\chi_C$ is ru-usc relative to $u_0$.
\end{corollary}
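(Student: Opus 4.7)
The plan is to recognize that this corollary is essentially a direct application of Proposition~\ref{inf-convol}~\ref{convboun}, using the indicator function as the second function in the inf-convolution. All the heavy lifting has already been done.

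First, I would verify that $g := \chi_C$ satisfies the hypotheses of Proposition~\ref{inf-convol}~\ref{convboun} relative to $0$. Since $C$ is strongly star-shaped relative to $0 \in C$, Remark~\ref{indicator-ruusc} immediately gives that $\chi_C$ is ru-usc relative to $0 \in \dom\chi_C = C$. Next, I would observe that $\chi_C \equiv 0$ on its effective domain $C$, so
\begin{align*}
\sup_{v\in\dom\chi_C}\vert\chi_C(v)\vert = 0 \sinf \infty,
\end{align*}
which is exactly the boundedness condition required by item~\ref{convboun} of Proposition~\ref{inf-convol}.

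Having checked the hypotheses, I would simply invoke Proposition~\ref{inf-convol}~\ref{convboun} with this choice of $g$ to conclude that $f \triangledown \chi_C$ is ru-usc relative to $u_0$. Since
\begin{align*}
\left(f\triangledown\chi_C\right)(u) = \inf_{v\in X}\left\{f(u-v)+\chi_C(v)\right\} = \inf_{v\in C}f(u-v),
\end{align*}
this also gives the concrete representation mentioned in the paragraph preceding the corollary.

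There is no genuine obstacle here — the proof is a one-line verification plus an application of the preceding proposition. The only thing requiring any care is ensuring that one uses item~\ref{convboun} (the bounded-$g$ case) rather than item~\ref{minconv} (the bounded-below case), because in general $f$ need not be bounded below, so the hypothesis $\inf_X f > -\infty$ of~\ref{minconv} may fail, whereas the hypothesis $\sup_{\dom g}\vert g\vert < \infty$ of~\ref{convboun} is automatic when $g$ is an indicator function.
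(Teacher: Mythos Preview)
Your proposal is correct and follows exactly the same approach as the paper: the paper's justification (given in the paragraph immediately preceding the corollary) notes that $\chi_C$ is ru-usc relative to $0$ by Remark~\ref{indicator-ruusc}, observes that $\sup_{v\in\dom\chi_C}\vert\chi_C(v)\vert=0\sinf\infty$, and then applies Proposition~\ref{inf-convol}~\ref{convboun}. Your additional remark that item~\ref{convboun} is the correct choice (rather than~\ref{minconv}, since $f$ need not be bounded below) is a helpful clarification.
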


\section{Application to the relaxation with constraints}\label{4}
Let $d,m\ge 1$ be two integers and $p\in]1,\infty[$. Let $\Omega\subset\RR^d$ be a bounded open set with Lipschitz boundary. We denote by $\MM^{m\times d}$ the space of $m$ rows and $d$ columns matrices.
\subsection{Star-shaped subsets in $W^{1,p}(\Omega;\RR^m)$}
We consider the class of subsets $S\subset\MM^{m\times d}$ satisfying 
\begin{hypHH}
\item\label{H3} $0\in S$;
\item\label{H4} for every sequence $\{u_n\}_n\subset W^{1,p}(\Omega;\RR^m)$ such that $u_n\wto u$ in $W^{1,p}(\Omega;\RR^m)$, if for every $n\in\NN$ we have $\nabla u_n(\cdot)\in S$ a.e. in $\Omega$ then for every $t\in[0,1[$ it holds $t\nabla u(\cdot)\in S$ a.e. in $\Omega$.
\end{hypHH}
Define $\D\subset W^{1,p}(\Omega;\RR^m)$ by 
\begin{align}\label{Dconstraints}
\D:=\left\{u\in W^{1,p}(\Omega;\RR^m):\nabla u(x)\in S\mbox{ a.e. in }\Omega\right\}.
\end{align}
We say that $\D$ is {\em weakly sequentially strongly star-shaped relative to $0$ in $W^{1,p}(\Omega;\RR^m)$} when $W^{1,p}(\Omega;\RR^m)$ is endowed with the weak topology and $\D$ is sequentially strongly star-shaped relative to $0$. We denote by $\overseqw{D}$ the sequential weak closure of $\D$ in $W^{1,p}(\Omega;\RR^m)$.

The following result shows that the conditions~\ref{H3} and~\ref{H4} on $S$ give rise to sequentially strongly star-shaped sets of the form $\D$.
\begin{lemma}\label{D-starshaped} If $S$ satisfies~\ref{H3} and~\ref{H4} then $\D$ is weakly sequentially strongly star-shaped relative to $0$. 
\end{lemma}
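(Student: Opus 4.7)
The plan is to unwrap the definitions and invoke (H$_2$) directly. Fix $u\in\overseqw{\D}$. By definition of the sequential weak closure in $W^{1,p}(\Omega;\RR^m)$, there exists a sequence $\{u_n\}_n\subset \D$ such that $u_n\wto u$ weakly in $W^{1,p}(\Omega;\RR^m)$. Since every $u_n\in\D$, we have $\nabla u_n(x)\in S$ for almost every $x\in\Omega$ and every $n\in\NN$. Fix $t\in[0,1[$. Hypothesis (H$_2$) applied to this sequence yields
\begin{align*}
t\nabla u(x)\in S\quad\text{for almost every }x\in\Omega.
\end{align*}
Noting that $tu+(1-t)\cdot 0=tu\in W^{1,p}(\Omega;\RR^m)$ with $\nabla(tu)=t\nabla u$, this says precisely that $tu\in\D$, i.e., $t\overseqw{\D}+(1-t)\cdot 0\subset \D$.

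It remains to observe that $0\in \D$, which is needed in order for the star-shaped relation to be based at a point of $\D$. This is immediate from (H$_1$): the zero function $u\equiv 0$ belongs to $W^{1,p}(\Omega;\RR^m)$ with $\nabla u\equiv 0$, and by (H$_1$) we have $0\in S$, so $\nabla u(x)=0\in S$ a.e.

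There is essentially no obstacle here, since (H$_2$) is formulated exactly so as to express the sequentially strongly star-shaped property at the level of the constraint set $S$; the lemma just transfers this to the level of $\D$ via the trivial identification $\nabla(tu)=t\nabla u$. The only point worth noting is that we are using the sequential weak closure rather than the (weak) closure of $\D$, which matches the hypothesis of (H$_2$) (phrased in terms of weakly convergent sequences) and the definition of \emph{sequentially} strongly star-shaped.
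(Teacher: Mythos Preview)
Your proof is correct and follows essentially the same approach as the paper's proof, which consists of the single line ``for every $t\in[0,1[$ and every $u\in\overseqw{\D}$ we have $tu\in\D$ since (H$_2$).'' You have simply unwrapped the definitions in more detail and additionally made explicit the (trivial but necessary) verification that $0\in\D$ via (H$_1$).
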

\begin{proof} For every $t\in[0,1[$ and every $u\in\overseqw{\D}$ we have $tu\in\D$ since~\ref{H4}.
\end{proof}

\begin{example} It is not difficult to see that if $S$ is convex with $0\in \inte(S)$ then $S$ satisfies~\ref{H3} and~\ref{H4}. Indeed, using Mazur lemma we have $\nabla u(\cdot)\in \overline{S}$ a.e. in $\Omega$. Then for every $t\in [0,1[$ it holds $t\nabla u(\cdot)\in S$ a.e. in $\Omega$ since $\overline{S}$ is convex and $0\in\inte(S)$.
\end{example}
We give an example of a family of nonconvex sets satisfying~\ref{H3} and~\ref{H4}.
\begin{example} Assume that $m=d=2$. For each $\eps\ssup 0$ consider the set 
\[
S_\eps:=\left\{\xi\in\MM^{2\times 2}:\eps+\det(\xi)\ssup \tr(\xi)^2\right\}.
\]
We have for every $\eps\ssup 0$
\begin{hyp1}
\item\label{st-1} $0\in S_\eps$ ( i.e.~\ref{H3} is satisfied);
\item\label{st0} for every $\xi\in\overline{S_\eps}$ we have $t\xi\in S_\eps$ for all $t\in [0,1[$;
\item\label{st1} $S_\eps$ is not convex;
\item\label{st2} $S_\eps$ is not bounded;
\item\label{st3} $S_\eps$ is rank-one convex.
\end{hyp1}
Fix $\eps\ssup 0$. The set $S_\eps$ satisfies~\ref{H4} for all $p\ssup 2$. Indeed, let $\{u_n\}_n\subset W^{1,p}(\Omega;\RR^m)$ be such that $u_n\wto u$ in $W^{1,p}(\Omega;\RR^m)$ and for every $n\in\NN$ it holds $\nabla u_n(\cdot)\in S_\eps$ a.e. in $\Omega$. By a classical result (see~\cite[Theorem 8.20, p. 395]{dacorogna08}) we have 
\begin{align}\label{det}
\det(\nabla u_n(\cdot))\wto\det(\nabla u(\cdot)) \mbox{ in }L^{p\over 2}(\Omega;\RR^m).
\end{align}
The function $\tr(\cdot)^2$ is convex and continuous, so for every Borel set $A\subset\Omega$ we have
\begin{align}\label{tr}
\liminf_{n\to \infty}\int_A \tr(\nabla u_n(x))^2dx\ge \int_A \tr(\nabla u(x))^2dx.
\end{align}
Using~\eqref{det} and~\eqref{tr} we find for almost all $x\in\Omega$ and $\rho\ssup 0$
\begin{align*}
\eps+\frac{1}{\vert B_\rho(x)\vert}\int_{B_\rho(x)} \det(\nabla u(y))dy\ge \frac{1}{\vert B_\rho(x)\vert}\int_{B_\rho(x)} \tr(\nabla u(y))^2dy,
\end{align*}
then by passing to the limit $\rho\to 0$ we obtain $\nabla u(\cdot)\in \overline{S_\eps}$ a.e. in $\Omega$. By~\ref{st0} we have $t\nabla u(\cdot)\in S_\eps$ a.e. in $\Omega$ for all $t\in [0,1[$.

\noindent{The properties~\ref{st-1} and~\ref{st2} are immediate.}

\noindent{Proof of~\ref{st0}.} Let $t\in[0,1[$ and $\xi\in\overline{S_\eps}$. We have 
\begin{align*}
\det(t\xi)=t^2\det(\xi)\ge t^2(\tr(\xi)^2)-t^2\eps\ssup \tr(t\xi)^2-\eps.
\end{align*}
\\
\noindent{Proof of~\ref{st1}.} Consider
\begin{align*}
\xi:=
\left(
\begin{array}{ccc}
\sqrt{\frac{3}{2}\eps}  &   -\sqrt{\eps}   \\
 \sqrt{\eps} &        0
\end{array}
\right)\quad\mbox{ and }\quad
\zeta:=
\left(
\begin{array}{ccc}
0  &   \sqrt{\eps}   \\
 -\sqrt{\eps} &       \sqrt{\frac{3}{2}\eps}
\end{array}
\right)
\end{align*}
It is easy to see that $\xi,\zeta\in S_\eps$ and $\frac{1}{2}\xi+\frac{1}{2}\zeta\notin S_\eps$.
\\
\noindent{Proof of~\ref{st3}.} We have to show that $t\xi+(1-t)\zeta\in S_\eps$ for all $t\in ]0,1[$ whenever $\xi,\zeta\in S_\eps$ satisfy $\rk(\xi-\zeta)\le 1$. Property~\ref{st3} follows by using the fact that $\det(\cdot)$ is quasiaffine (see~\cite[Example 5.21(i), p. 179]{dacorogna08}) and $\tr(\cdot)^2$ is convex.
\end{example}

\subsection{Relaxation of multiple integrals with star-shaped constraints}
Let $L:\MM^{m\times d}\to [0,\infty[$ be a quasiconvex (in the sense of Morrey) integrand with $p$-polynomial growth, i.e., $L$ satisfies
\begin{hypH}
\item\label{H1}$\displaystyle L(\xi)=\inf\left\{\int_{]0,1[^d}L(\xi+\nabla u(x))dx:u\in W^{1,\infty}_0(]0,1[^d;\RR^m)\right\}\quad\mbox{ for all }\xi\in\MM^{m\times d}$;
\item\label{H2}There exist $c,C\ssup 0$ such that $c\vert \xi\vert^p\le L(\xi)\le C(1+\vert\xi\vert^p)$ for all $\xi\in\MM^{m\times d}$.
\end{hypH}
Define the integral functional $F:W^{1,p}(\Omega;\RR^m)\to [0,\infty]$ by 
\begin{align*}
F(u):=
\left\{
\begin{array}{cl}
\displaystyle \int_{\Omega} L(\nabla u(x))dx & \mbox{ if }\nabla u(x)\in S\mbox{ a.e. in }\Omega   \\
  \infty&  \mbox{ otherwise.}
\end{array}
\right.
\end{align*}
Our goal here is to give a representation of the lower semicontinuous envelope $F$ in $W^{1,p}(\Omega;\RR^m)$ endowed with the strong topology of $L^{p}(\Omega;\RR^m)$:

\begin{align*}
\overline{F}(u):=\inf\left\{\liminf_{n\to \infty} F(u_n):W^{1,p}(\Omega;\RR^m)\ni u_n\stackrel{L^p}{\to} u\right\}.
\end{align*} 

If $\L:W^{1,p}(\Omega;\RR^m)\to [0,\infty]$ is defined by 
\begin{align*}
\L(u):=\int_\Omega L(\nabla u(x))dx.
\end{align*}
then for every $u\in W^{1,p}(\Omega;\RR^m)$
\begin{align*}
{F}(u)=\L(u)+\chi_{\D}(u),
\end{align*}
where $\D$ is given by~\eqref{Dconstraints}.
\begin{lemma}\label{ru-usc-qc} If $L$ satisfies~\ref{H1} and~\ref{H2} then $\L$ is ru-usc in $\D$ relative to $0$.
\end{lemma}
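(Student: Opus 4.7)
The plan is to reduce the ru-usc inequality to a pointwise estimate on the integrand $L$ and then integrate. The key tool is the classical local Lipschitz estimate for quasiconvex integrands with $p$-polynomial growth (Marcellini's lemma): under~\ref{H1} and~\ref{H2}, there exists $\beta>0$ such that
\[
\lvert L(\xi)-L(\eta)\rvert\le \beta\bigl(1+\lvert\xi\rvert^{p-1}+\lvert\eta\rvert^{p-1}\bigr)\lvert\xi-\eta\rvert\quad\mbox{for all }\xi,\eta\in\MM^{m\times d}.
\]
This estimate is a standard consequence of quasiconvexity together with $p$-growth from above and non-negativity; it is not stated in the excerpt but is the natural tool here.

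Applying this inequality with $\eta=t\xi$ and $t\in[0,1[$, using $t^{p-1}\le 1$ and the elementary bound $\lvert\xi\rvert\le 1+\lvert\xi\rvert^p$, I would obtain
\[
L(t\xi)-L(\xi)\le \beta(1-t)\bigl(\lvert\xi\rvert+2\lvert\xi\rvert^p\bigr)\le 3\beta(1-t)\bigl(1+\lvert\xi\rvert^p\bigr).
\]
The coercivity $c\lvert\xi\rvert^p\le L(\xi)$ from~\ref{H2} then converts this into the key pointwise bound $L(t\xi)-L(\xi)\le 3\beta(1-t)\bigl(1+L(\xi)/c\bigr)$.

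Next I would integrate in $x\in\Omega$ with $\xi=\nabla u(x)$ for $u\in\D\subset W^{1,p}(\Omega;\RR^m)$. The upper bound in~\ref{H2} ensures both $\L(u)$ and $\L(tu)$ are finite, and since $L\ge 0$ we have $\lvert\L(u)\rvert=\L(u)$. Hence
\[
\L(tu)-\L(u)\le 3\beta(1-t)\Bigl(\lvert\Omega\rvert+\frac{\L(u)}{c}\Bigr)\le \frac{3\beta}{c}(1-t)\bigl(a+\L(u)\bigr)
\]
with the choice $a:=c\lvert\Omega\rvert$. Dividing by $a+\lvert\L(u)\rvert$ and taking the supremum over $u\in\D$ gives $\Delta^{a}_{\L,\D,0}(t)\le (3\beta/c)(1-t)$, which tends to $0$ as $t\to 1$. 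This is precisely the definition of $\L$ being ru-usc in $\D$ relative to $0$.

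The main obstacle is the recourse to Marcellini's local Lipschitz estimate for quasiconvex functions with $p$-growth, which is not recalled in the excerpt; once that ingredient is available, the remainder is routine manipulation of~\ref{H2}. Note that the star-shaped structure of $\D$ plays no role in this argument: the resulting bound is uniform over all of $W^{1,p}(\Omega;\RR^m)$.
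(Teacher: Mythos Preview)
Your proof is correct and follows essentially the same approach as the paper: both use the local Lipschitz estimate for quasiconvex integrands with $p$-growth (the paper cites it as~\cite[Proposition 2.32, p. 51]{dacorogna08}), combine it with the coercivity in~\ref{H2} to bound $\lvert\xi\rvert^p$ by $L(\xi)/c$, and arrive at an explicit modulus of the form $\Delta^{a}_{\L,\D,0}(t)\le \mathrm{const}\cdot(1-t)$. The only differences are cosmetic---you work pointwise and then integrate, while the paper writes the estimate directly at the integral level, and the constants differ slightly ($a=c\lvert\Omega\rvert$ for you versus $a=\lvert\Omega\rvert$ in the paper).
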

\begin{proof} Since $L$ satisfies~\ref{H1} and~\ref{H2} we have for some $C^\prime\ssup 0$
\begin{align}\label{lipquasi}
\vert L(\xi)-L(\zeta)\vert\le C^\prime\vert \xi-\zeta\vert(1+\vert\xi\vert^{p-1}+\vert\zeta\vert^{p-1})
\end{align}
for all $\xi,\zeta\in\MM^{m\times d}$ (see for instance~\cite[Proposition 2.32, p. 51]{dacorogna08}). For every $u\in\D$ and every $t\in [0,1[$
\begin{align*}
\vert\L(tu)-\L(u)\vert&\le\int_\Omega 2C^\prime(1-t)\vert\nabla u(x)\vert(1+\vert\nabla u(x)\vert^{p-1})dx\\
&\le 4C^\prime(1-t)\int_\Omega 1+\vert\nabla u(x)\vert^{p}dx\\
&\le 4C^\prime(1-t)\int_\Omega 1+\frac1c L(\nabla u(x))dx\\
&\le 4C^\prime\max\left\{1,\frac1c\right\}(1-t)(\vert\Omega\vert+\L(u)).
\end{align*}
Then $\Delta_{\L,\D,0}^{\vert\Omega\vert}(t)\le 4C^\prime\max\left\{1,\frac1c\right\}(1-t)$, the proof is complete by letting $t\to 1$. 
\end{proof}

\begin{theorem}\label{quasiconstraints} If $S$ satisfies~\ref{H3} and~\ref{H4}, and if $L$ satisfies~\ref{H1} and~\ref{H2}, then for every $u\in W^{1,p}(\Omega;\RR^m)$
\begin{align}\label{formulaintegral}
\overline{F}(u)=J(u)+\chi_{\overseqw{\D}}(u).
\end{align}
\end{theorem}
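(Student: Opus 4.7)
The plan is to apply the sequential version Theorem~\ref{GENERAL-FORMULA-SEQ} to $f=F$, $D=\D$, and $u_0=0$, with the ambient space $W^{1,p}(\Omega;\RR^m)$ endowed with its \emph{weak} topology, and then translate the conclusion back to the strong $L^p$ relaxation that defines $\overline{F}$.

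Before applying the theorem, I would reduce the two topologies. Let $F_w$ denote the weak-$W^{1,p}$ sequential lower semicontinuous envelope of $F$. The equality $\overline{F}=F_w$ rests on two standard ingredients: (a) the Rellich--Kondrachov compact embedding $W^{1,p}(\Omega;\RR^m)\hookrightarrow L^p(\Omega;\RR^m)$, valid for $\Omega$ bounded Lipschitz, which turns any weak-$W^{1,p}$ recovery sequence into a strong-$L^p$ one, giving $F_w\ge\overline{F}$; (b) the coercivity $c|\xi|^p\le L(\xi)$ from~\ref{H2}, which forces any $L^p$-convergent sequence with bounded $F$-energy to be bounded in $W^{1,p}$, hence (along a subsequence) weakly $W^{1,p}$-convergent to the same limit, giving $\overline{F}\ge F_w$.

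Next I would verify the hypotheses of Theorem~\ref{GENERAL-FORMULA-SEQ}. Sequential strong star-shapedness of $\D$ relative to $0$ in the weak topology is exactly Lemma~\ref{D-starshaped}. The ru-usc property is Lemma~\ref{ru-usc-qc}, transferred from $J$ to $F$ via the identity $F=J$ on $\D$. The bound $\inf_\D F\ge 0>-\infty$ is trivial. The remaining condition $\overseq{F+\chi_\D}=F$ on $\D$ reduces, since $F+\chi_\D=F$ identically, to weak-$W^{1,p}$ sequential lower semicontinuity of $J$ on $\D$, which is the classical Morrey--Acerbi--Fusco result under the quasiconvexity~\ref{H1} and growth~\ref{H2} (see, e.g.,~\cite{dacorogna08}).

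Part~(i) of Theorem~\ref{GENERAL-FORMULA-SEQ} then yields $F_w(u)=\widehat{F}_0(u)+\chi_{\overseqw{\D}}(u)$, while part~(ii) identifies $\widehat{F}_0(u)=\lim_{t\uparrow 1}F(tu)$ on $\overseqw{\D}$. For $u\in\overseqw{\D}$ and $t\in[0,1[$, hypothesis~\ref{H4} yields $tu\in\D$, hence $F(tu)=\int_\Omega L(t\nabla u)\,dx$; the integrable envelope $L(t\nabla u(x))\le C(1+|\nabla u(x)|^p)\in L^1(\Omega)$ from~\ref{H2} together with the continuity of $L$ inherited from the local Lipschitz estimate~\eqref{lipquasi} allow dominated convergence, so $\lim_{t\uparrow 1}J(tu)=J(u)$. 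Combining with the identification $\overline{F}=F_w$ established above produces~\eqref{formulaintegral}. The only genuinely delicate point is the topology switch: the abstract machinery is applied in the weak $W^{1,p}$ topology, where~\ref{H4} ensures the geometric star-shaped property, whereas the statement is phrased for the strong $L^p$-sequential envelope; reconciling the two via Rellich--Kondrachov and coercivity is what makes the scheme go through.
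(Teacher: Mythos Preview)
Your proposal is correct and follows essentially the same route as the paper: reduce $\overline{F}$ to the weak-$W^{1,p}$ sequential envelope via coercivity and Rellich--Kondrachov, invoke Lemma~\ref{D-starshaped} and Lemma~\ref{ru-usc-qc} together with the Acerbi--Fusco lower semicontinuity to feed the sequential radial-representation theorem, and finish by dominated convergence to identify $\widehat{J}_0=J$ on $\overseqw{\D}$. The only cosmetic differences are that the paper applies the abstract theorem to $J$ (writing $F=J+\chi_\D$) rather than to $F$ directly, and cites Theorem~\ref{GENERAL-FORMULA} where your citation of the sequential version Theorem~\ref{GENERAL-FORMULA-SEQ} is arguably the more precise one.
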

\begin{proof} The sequential relaxation of a functional $\Phi:W^{1,p}(\Omega;\RR^m)\to [0,\infty]$ with respect to the weak convergence in $W^{1,p}(\Omega;\RR^m)$ is given by
\begin{align*}
\overseqw{\Phi}(u):=\inf\left\{\liminf_{n\to \infty} \Phi(u_n):W^{1,p}(\Omega;\RR^m)\ni u_n\wto u\right\}.
\end{align*} 
Note that $\overline{F}=\overseqw{F}$ since the coercivity condition~\ref{H2}. So, it suffices to show that 
\begin{align*}
\overseqw{F}(u)=J(u)+\chi_{\overseqw{\D}}(u).
\end{align*}

We have $\overseqw \L=\L$ since~\ref{H1} and~\ref{H2} (see for instance~\cite{acerbi-fusco84}). Then it holds $\overseqw{\L+\chi_\D}=\L$ on $\D$, indeed we have
\begin{align*}
\L=\overseqw{\L}\le \overseqw{\L+\chi_\D}\le \L+\chi_\D.
\end{align*}
Using Lemma~\ref{ru-usc-qc} and Lemma~\ref{D-starshaped} we obtain by Theorem~\ref{GENERAL-FORMULA} 
\begin{align*}
\overseqw{F}=\widehat{\L}_0+\chi_{\overseqw\D}\quad\mbox{ with }\quad\widehat{\L}_0(u)=\lim_{t\uparrow 1}\L(tu).
\end{align*}
It remains to prove that $\widehat{\L}_0=\L$ on $\overseqw\D\ssetminus\D$ since $\overseqw{F}=\overseqw{\L+\chi_\D}=\L$ on $\D$. Fix $u\in \overseqw\D\ssetminus\D$. Using the polynomial growth~\ref{H2} we have for almost all $x\in\Omega$
\[
\sup_{t\in [0,1[}L(t\nabla u(x))\le C(1+\Vert \nabla u(x)\Vert^p).
\]
The integrand $L$ is continuous since~\eqref{lipquasi}. We may apply the dominated convergence theorem to obtain
\begin{align*}
\widehat{\L}_0(u)=\lim_{t\uparrow 1}\L(tu)=\int_\Omega\lim_{t\uparrow 1} L(t\nabla u(x))dx=\L(u).
\end{align*}
The proof is complete.
\end{proof}
\begin{remark} The equality~\eqref{formulaintegral}, which can be rewritten as 
\begin{align}\label{formulaintegral2}
\overline{J+\chi_D}=J+\chi_{\overseqw{\D}},
\end{align}
looks natural since $J$ is of ``$p$-polynomial growth" (and lsc with respect to the strong topology in $L^p(\Omega;\RR^m)$) and since the star-shaped property (of the constraints) can be seen as a kind of ``regularity" on the constraints. An interesting further extension is to study whether similar equality holds when we replace the lsc envelope by a $\Gamma$-limit procedure and $J$ by a sequence of funtionals $\{J_n\}_n$ (for an interesting discussion about constrained problems see~\cite[p. 499]{degiorgi79}).
\end{remark}

\end{document}